\newtheorem{theorem}{Theorem}[section]
\newtheorem{lemma}[theorem]{Lemma}
\newtheorem{proposition}[theorem]{Proposition}
\newtheorem{definition}[theorem]{Definition}
\newtheorem{remark}[theorem]{Remark}
\numberwithin{equation}{section}
\newcommand{\HH}{\mathrm{HH}}
\newcommand{\Hc}{\mathrm{H}}
\newcommand{\res}{\mathrm{res}}
\newcommand{\Ima}{\mathrm{Im}}
\newcommand{\Ind}{\mathrm{Ind}}
\newcommand{\Res}{\mathrm{Res}}
\newcommand{\id}{\mathrm{id}}
\newcommand{\tr}{\mathrm{tr}}
\newcommand{\Ext}{\mathrm{Ext}}
\newcommand{\Fp}{\mathbb{F}_p}
\newcommand{\Z}{\mathbb{Z}}
\begin{document}

\begin{frontmatter}



\title{Bockstein homomorphisms for Hochschild cohomology of group algebras and of block algebras of finite groups}


\author{Constantin-Cosmin Todea}

\address{Department of Mathematics, Technical University of Cluj-Napoca, Str. G. Baritiu 25,
 Cluj-Napoca 400027, Romania}

\ead{Constantin.Todea@math.utcluj.ro}
\begin{abstract} We give an explicit approach for Bockstein homomorphisms of the Hochschild cohomology of a group algebra and of a block algebra of a finite group and we show some properties. To give  explicit definitions for these maps we use an additive decomposition and a Product Formula for the Hochschild cohomology of group algebras given by Siegel and Witherspoon in 1999. For $k$ an algebraically closed field of characteristic $p$ and $G$ a finite group we prove an additive decomposition and a Product Formula for the cohomology algebra of a defect group of a block ideal of $kG$ with coefficients in the source algebra of this block, and we define similar Bockstein homomorphisms.
\end{abstract}

\begin{keyword}
finite, group, block, Hochschild, cohomology, Bockstein
\end{keyword}

\end{frontmatter}


\section{Introduction}
The Bockstein homomorphism in group cohomology is the connecting homomorphism in the long exact sequence associated  to some short exact sequence of coefficients. It appears in the Bockstein spectral sequence, which is a tool for comparing integral and mod $p$ cohomology ($p$ is a prime), and has applications for Steenrod operations. We use results from \cite{BenII}, regarding Bockstein maps in group cohomology and with some minor adjustments we adopt the same notations. For the rest of the paper, if otherwise not specified, let $G$ be a finite group, let $\Fp$ be the field with $p$ elements and $k$ be a field of characteristic $p$, with $p$ dividing the order of $G$. Let $r$ be a non-negative degree integer; since we work with more than one group  we denote by $\beta_G$ the Bockstein homomorphism
$$\beta_G:\Hc^r(G,\Fp)\rightarrow\Hc^{r+1}(G,\Fp)$$
and by $\widehat{\beta}_G$ the homomorphism
$$\widehat{\beta}_G:\Hc^r(G,\Fp)\rightarrow\Hc^{r+1}(G,\Z).$$

Another application of Bockstein homomorphism is that $\beta_G$ gives a structure of (cochain) DG-algebra to the cohomology algebra $\Hc^*(G,\Fp)$, that is $\beta_G$ is a differential (\cite[Lemma 4.3.2]{BenII}) and satisfies the graded Leibniz rule, (\cite[Lemma 4.3.3]{BenII}). In this paper we will call a map which satisfies this rule a graded derivation. Bockstein maps appear also in a celebrated theorem of Serre \cite[Theorem 4.7.3]{BenII} on zero product of Bocksteins, given originally in a different form. This theorem is an important ingredient  for proving a theorem of Carlson (or Quillen) which characterizes nilpotent elements of positive degree in Ext-algebras over finitely generated $kG$-modules by nilpotence of restrictions to elementary abelian subgroups. Further, this theorem can be used  for showing a remarkable theorem of Chouinard \cite{Cho} which states that a finitely generated $kG$-module $M$ is projective if and only if its restriction to every elementary abelian subgroup of $G$ is projective.

In the case of group algebras it is well known that Hochschild cohomology $\HH^*(kG)$ can be identified with the ordinary cohomology  $\Hc^*(G,kG)$, where $kG$ is viewed as left $kG$-module by conjugation. This observation goes back to Eilenberg and MacLane \cite{EilMac}. However the isomorphism as graded $k$-algebras was given more recently by Siegel and Witherspoon \cite[Proposition 3.1]{SieWith}. Here $\HH^*(kG)$ is an associative graded algebra with the usual Hochschild cup  product and $\Hc^*(G,kG)$ is an associative graded algebra with the product called \emph{cup product with respect to the pairing} $\mu$, where $\mu$ is the multiplication map in $kG$; see \cite[Sections 2, 3]{SieWith} for a general discussion regarding cup products in this context. For the rest of this paper, this identification allows us to say and have in mind Hochschild cohomology of group algebras $\HH^*(kG)$, but we will work with $\Hc^*(G,kG)$. In 
Section \ref{sec2} we will define  Bockstein homomorphisms for the Hochschild cohomology of group algebras $\HH^*(kG)$. In the first part of this section we define the Bockstein homomorphism for $\Hc^*(G,\Fp G)$. Using an additive decomposition and a Product Formula from \cite{SieWith}, in the main result of this section, Theorem \ref{thmBockderiv}, we extend this definition to $\Hc^*(G,kG)$ and we  prove that $\Hc^*(G,kG)$ is also a DG-algebra.

In Section \ref{sec3} we deal with block algebras of $kG$, where from now $k$ is an algebraically closed field of characteristic $p$. Cohomology of block algebras of finite groups was first introduced by Linckelmann in \cite{LiTr} and further studied in a series of papers \cite{LiVar}, \cite{LiQuill}. An important recent paper is \cite{LiVariso} where the author shows that the varieties of the Hochschild cohomology of a block algebra and its block cohomology are isomorphic, hence answering some questions raised by Pakianathan and Witherspoon \cite{PakWith}. As in the case of group algebras we identify the Hochschild cohomology of a block algebra $B$ of $kG$ with the cohomology of $G$ with coefficients in $B$, where $G$ acts by conjugation on $B$, that is $\HH^*(B)\cong \Hc^*(G,B)$. With this identification we define Bockstein homomorphisms $\bbbeta_{G,B}$ for $\Hc^*(G,B)$ in Definition \ref{defnBockHHbloc}.  Using a trick we obtain in Proposition \ref{prop31addithochbloc} an additive decomposition of $\Hc^*(G,B)$ which allows us to give an alternative, explicit  definition of the Bockstein homomorphisms $\bbbeta_{G,B}$ for $\Hc^*(G,B)$. After we recall the definition of the block cohomology which we denote by $\Hc^*(G,b,P_{\delta})$, where $b$ is the primitive central idempotent corresponding to $B$ and $P_{\delta}$ is a defect pointed group of $b$,  it is easy to see that Bockstein homomorphisms of group cohomology can be restricted to Bockstein homomorphisms of Linckelmann block cohomology $\beta_{G,b}$. The canonical embedding  \cite[Theorem 5.6 (iii)]{LiTr} from block cohomology to $\HH^*(B)$ is an important ingredient which appears in the main result of \cite{LiVar}. If $\bbbeta_{G,B}$ and $\beta_{G,b}$ are somehow compatible through this canonical map remains a question for further investigations.

In Section \ref{sec4} we study the graded $k$-algebra $\Hc^*(\Delta P, ikGi)$, where $P_{\delta}$ is a defect pointed group of $b$ and $i\in \delta$ is a source idempotent. The $k$-algebra $ikGi$ is called the source algebra and is a $\Delta P$-module, see \cite{TH} for more details. Since the block algebra $B$ is Morita equivalent to its source algebra $ikGi$ an important ingredient for proving the main result of \cite{LiVar} was $\Hc^*(\Delta P, ikGi)$. So, to determine its properties could be useful for future applications. This is the goal of this section where we determine an additive decomposition and a Product Formula for $\Hc^*(\Delta P, ikGi)$ with respect to cohomology of some subgroups in $\Delta P$ with trivial coefficients, see Theorem \ref{thmadditivdecom}. Theorem \ref{thmprodformula}  is one of the main results of this article and can be viewed as a Product Formula of 
the cohomology algebra $\Hc^*(\Delta P, ikGi)$ through the additive decomposition of cohomology graded vector spaces of some finite subgroups in $\Delta P$ with trivial coefficients, from Theorem \ref{thmadditivdecom}, and by using the cup products of restrictions in ordinary group cohomology with trivial coefficients.

In conclusion  this article can be divided in two big parts. In the first part we prove similar results (Theorem \ref{thmBockderiv}, Remark \ref{remdefnbockcohombl}, Theorem \ref{thmbocksource}) for different cohomology algebras which can be summarized in the next theorem:
\begin{theorem} With the above notations  let $\Hc^*$ be one of the cohomology algebras in the set $\{\Hc^*(G,kG), \Hc^*(G,b,P_{\delta}),\Hc^*(\Delta P,ikGi)\}$. Then there is a map $\bbbeta:\Hc^r\rightarrow \Hc^{r+1}$ for any non-negative integer $r$ such that $\bbbeta$ is a differential and satisfies the graded Leibniz rule. In particular $\Hc^*$ is a DG-algebra with respect to $\bbbeta$.
\end{theorem}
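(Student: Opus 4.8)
The statement packages together the conclusions of Theorem~\ref{thmBockderiv}, Remark~\ref{remdefnbockcohombl} and Theorem~\ref{thmbocksource}, so the plan is to treat the three algebras in turn, in each case first producing the map $\bbbeta$ out of the ordinary Bockstein homomorphisms $\beta_H$ of suitable subgroups $H$ via the relevant additive decomposition, and then checking that $\bbbeta$ is a differential and a graded derivation by transporting the corresponding properties of the $\beta_H$ across the decomposition and the Product Formula.

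For $\Hc^*(G,kG)$ I would first reduce to $\Fp$-coefficients, using that $k$ is flat over $\Fp$ so that $\Hc^*(G,kG)\cong k\otimes_{\Fp}\Hc^*(G,\Fp G)$, and noting that it suffices to define the Bockstein on the $\Fp$-form. On $\Hc^*(G,\Fp G)$ the Bockstein $\beta_G$ is the connecting homomorphism attached to $0\to\Fp G\to (\Z/p^2)G\to \Fp G\to 0$, the coefficient modules being taken with the conjugation action of $G$; the first task is just to record that this connecting map exists and squares to zero by the usual homological argument. Next I would invoke the Siegel--Witherspoon additive decomposition $\Hc^*(G,\Fp G)\cong\bigoplus_{[x]}\Hc^*(C_G(x),\Fp)$, observing that under it $\beta_G$ becomes $\bigoplus_{[x]}\beta_{C_G(x)}$, so that $\bbbeta^2=0$ is immediate from $\beta_{C_G(x)}^2=0$ on each summand. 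The graded Leibniz rule is the substantive step: one feeds in the Product Formula of \cite{SieWith}, which expresses the cup product on $\Hc^*(G,kG)$ through cup products of \emph{restrictions} in ordinary group cohomology of the centralizers $C_G(x)$, and uses that each $\beta_{C_G(x)}$ is a graded derivation for the ordinary cup product together with the fact that restriction maps in group cohomology commute with Bockstein homomorphisms; combining these termwise yields the Leibniz identity for $\bbbeta$, which then passes back to $k$-coefficients.

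For the source algebra $\Hc^*(\Delta P,ikGi)$ the argument is formally the same but rests on the new additive decomposition of Theorem~\ref{thmadditivdecom} and the Product Formula of Theorem~\ref{thmprodformula}, which describe $\Hc^*(\Delta P,ikGi)$ and its product in terms of cohomology with \emph{trivial} coefficients of certain subgroups of $\Delta P$. I would define $\bbbeta$ as the direct sum of the ordinary Bocksteins of those subgroups under Theorem~\ref{thmadditivdecom}, then repeat the two verifications, the Leibniz rule again following by inserting the compatibility of Bocksteins with restriction into Theorem~\ref{thmprodformula}. For the block cohomology $\Hc^*(G,b,P_\delta)$, which is realised as the subalgebra of $\Hc^*(P,\Fp)$ cut out by the stability (fusion) conditions, the only point to check is that $\beta_P$ preserves this subalgebra; this holds because the Bockstein commutes with the restriction and conjugation maps defining stability, so $\beta_{G,b}:=\beta_P|_{\Hc^*(G,b,P_\delta)}$ is well defined and is automatically a differential and a graded derivation as a restriction of $\beta_P$.

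The main obstacle I expect is the Leibniz rule in the first and third cases: one must track the precise shape of the two Product Formulas and make sure that the restriction (and conjugation/transfer) maps occurring there really do intertwine the Bockstein homomorphisms of the various subgroups, including the bookkeeping of degrees and signs. Once that compatibility is in place, the differential property, the well-definedness of $\bbbeta$ and the passage from $\Fp$ to $k$ are routine, and the final assertion that each $\Hc^*$ is a DG-algebra with respect to $\bbbeta$ follows by definition.
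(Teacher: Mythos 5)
Your proposal is correct and follows essentially the same route as the paper: define $\bbbeta$ on each algebra via the relevant additive decomposition (Siegel--Witherspoon for $\Hc^*(G,kG)$, Theorem \ref{thmadditivdecom} for $\Hc^*(\Delta P,ikGi)$, and restriction of $\beta_P$ for block cohomology), check $\bbbeta^2=0$ directly, and obtain the graded Leibniz rule from the corresponding Product Formula together with the fact that restriction, transfer and conjugation commute with the ordinary Bocksteins. The only cosmetic differences are that the paper defines the Bockstein over $\Fp$ first via the connecting homomorphism (proving $\bbbeta_G^2=0$ there by the two-short-exact-sequences argument) and then extends to $k$ via $\Hc^*(G,\Fp)\otimes_{\Fp}k\cong\Hc^*(G,k)$, and that the derivation property in the Product Formula is applied at the level of the intersection subgroup $W$ rather than at $C_G(x)$ itself, but these are exactly the bookkeeping points you already flag as the expected obstacles.
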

 In the second part the main result is the Product Formula, Theorem \ref{thmprodformula}, with its usefulness  demonstrated in Section \ref{sec5} where we define Bockstein homomorphisms for $\Hc^*(\Delta P, ikGi)$ and we show more properties.
  
For results, notations and definitions regarding block algebras we follow \cite{TH}. For results in homological algebra we follow \cite{BenII} and \cite{Ev}.  For conjugation in a group $G$ if $x,a\in G$ then ${}^xa=xax^{-1}$ and $a^x=x^{-1}ax$; similarly for subgroups. If $A$ is any set we denote by $\id_A$ the identity map and by $\mu$ the multiplication map in any $k$-algebra.
\section{Hochschild cohomology of group algebras and Bockstein maps}\label{sec2}

The next lemma is a result which allows us to define  Bockstein maps in Hochschild cohomology of group algebras. The proof is trivial and is left as an exercise.
\begin{lemma}\label{lem21}

\begin{itemize}
\item[i)] There exist a short exact sequence of $\Z G$-modules, where $G$ acts by conjugation
$$\xymatrix{0\ar[r]&\Fp G\ar[r]^{i_G}&\mathbb{Z}/p^2[G]\ar[r]^{\pi_G}&\Fp G\ar[r]&0},$$
and $i_G,\pi_G$ are given by $i_G(\widehat{a}g)=\overline{pa}g,~~\pi_G(\overline{a}g)=\widehat{a}g$ for any $\widehat{a}\in\Fp,\overline{a}\in\mathbb{Z}/p^2$ and $ g\in G$; here $\widehat{a}=a+p\Z\in\Fp$ and $\overline{a}=a+p^2\Z\in\Z/p^2\Z$ for any $a\in \Z$.
\item[ii)]There exist a short exact sequence of $\Z G$-modules, where $G$ acts by conjugation
$$\xymatrix{0\ar[r]&\Z G\ar[r]^{i'_{G}}
&\Z G\ar[r]^{\pi'_{G}}&\Fp G\ar[r]&0},$$
and $i'_{G},\pi'_{G}$ are given by $i'_{G}(ag)=pag,~~\pi'_{G}(ag)=\widehat{a}g$ for any $a\in\Z, g\in G$.
\end{itemize}
\end{lemma}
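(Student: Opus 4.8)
The plan is to treat both parts uniformly: each sequence is obtained by applying the functor $-\otimes_{\Z}\Z G$ to a well known short exact sequence of abelian groups, and one only has to check that the resulting maps coincide with the ones written down and that they respect the conjugation action.

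For part i) I would begin with the classical short exact sequence of abelian groups
\[
0\longrightarrow\Fp\xrightarrow{\ \times p\ }\Z/p^2\Z\longrightarrow\Fp\longrightarrow 0,
\]
where the left-hand map sends $\widehat{a}$ to $\overline{pa}$ and the right-hand map is the canonical reduction. The left-hand map is well defined because $a\equiv a'\pmod p$ implies $pa\equiv pa'\pmod{p^2}$, it is injective, and exactness in the middle is immediate. Since $\Z G$ is a free (hence flat) $\Z$-module, with $\Z$-basis $G$, the functor $-\otimes_{\Z}\Z G$ is exact; using the canonical identifications $\Fp\otimes_{\Z}\Z G\cong\Fp G$ and $(\Z/p^2\Z)\otimes_{\Z}\Z G\cong\Z/p^2[G]$ one checks that the induced maps are precisely $i_G$ and $\pi_G$. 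Finally all three terms become $\Z G$-modules via the conjugation action on the tensor factor $\Z G$, and by functoriality $i_G,\pi_G$ are $\Z G$-linear. If one prefers to avoid tensor products the same facts follow directly on coefficients: $i_G$ is injective since $\overline{pa_g}=0$ in $\Z/p^2\Z$ forces $p\mid a_g$; $\pi_G$ is surjective since every element of $\Fp$ lifts to $\Z/p^2\Z$; $\Ker\pi_G$ consists of the elements $\sum_{g\in G}\overline{a_g}g$ with $p\mid a_g$ for all $g$, which is exactly $\Ima i_G$; and $G$-equivariance holds because conjugation merely permutes the basis $G$.

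Part ii) is handled in the same way, starting from $0\to\Z\xrightarrow{\ \times p\ }\Z\to\Fp\to 0$ and applying $-\otimes_{\Z}\Z G$; here $i'_G$ is injective because $\Z G$ is $\Z$-torsion-free, $\pi'_G$ is the coefficientwise reduction modulo $p$, and $\Ker\pi'_G=p\,\Z G=\Ima i'_G$, with $G$-equivariance again coming from functoriality (or from the obvious remark that multiplication by $p$ and reduction mod $p$ commute with conjugation).

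I do not expect any genuine obstacle here: the whole content is exhausted by the well-definedness of $i_G$ in part i) together with the freeness of $\Z G$ over $\Z$ that keeps the sequences exact after tensoring — which is precisely why the statement is recorded as a trivial exercise.
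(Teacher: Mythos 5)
The paper itself gives no proof here -- it explicitly says ``The proof is trivial and is left as an exercise'' -- so there is nothing to compare against; your argument is correct and fills in exactly what was omitted. Both the tensor-product route (apply the exact functor $-\otimes_{\Z}\Z G$ to $0\to\Fp\to\Z/p^2\Z\to\Fp\to 0$ and to $0\to\Z\xrightarrow{\times p}\Z\to\Fp\to 0$, using that $\Z G$ is $\Z$-free) and the direct coefficientwise verification you sketch are sound, and your observations about well-definedness of $i_G$ and about conjugation merely permuting the $\Z$-basis $G$ are precisely the points one needs to check.
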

The long exact sequence theorem applied on the short exact sequences from Lemma \ref{lem21} permit us to consider the next definition.
\begin{definition}\label{defBock}The \emph{Bockstein homomorphism in Hochschild cohomology of group algebras} is the connecting homomorphism obtained from the short exact sequence of Lemma \ref{lem21}, i)
$$\bbbeta_G:\Hc^r(G,\Fp G)\rightarrow\Hc^{r+1}(G,\Fp G)$$
Similarly, for the short exact sequence from Lemma \ref{lem21}, ii) we have the connecting homomorphism
$$\widehat{\bbbeta}_G:\Hc^r(G,\Fp G)\rightarrow\Hc^{r+1}(G,\Z G).$$
\end{definition}
Since we can identify $\Hc^*(G,\Fp G)$ with $\Ext^*_{\Fp G}(\Fp,\Fp G)$ we have that $\bbbeta_G$ is also a homomorphism of $\Fp $-vector spaces.

In order to prove the main result of this section we recall the additive decomposition and the \emph{product formula} from \cite[$\S$4, $\S$5]{SieWith} for $\Hc^*(G,kG)$.  We use the same notations and recall the results in our setting, when  $k$ is a field of characteristic $p$ and $G$ acts by conjugation on $G$; but notice that the results are true for any commutative ring and for any action by automorphisms of a finite group $H$ on the group $G$. Let $g_1=1,g_2,\ldots,g_s$ be a system of representatives of the conjugacy classes of the action of $G$ on $G$, where $s$ is the number of the conjugacy classes. Denote by $G_i:=C_G(g_i),~i\in\{1,\ldots, s\}$ the centralizer subgroup of $g_i$ in $G$. Next, fix $g\in G$ and let $i\in\{1,\ldots, s\}$. We consider the $k(C_G(g))$-module homomorphisms
$$\theta_g:k\rightarrow kG,~~~\theta_g(\lambda)=\lambda g,$$
$$\pi_g:kG\rightarrow k~~~\pi_g(\sum_{a\in G}\lambda_a a)=\lambda_g.$$
If $W$ is a subgroup of $C_G(g)$ then $\theta_g,\pi_g$ induce maps in cohomology
$$\theta_g^*:\Hc^*(W,k)\rightarrow\Hc^*(W,kG),$$
$$\pi_g^*:\Hc^*(W,kG)\rightarrow\Hc^*(W,k).$$
Next, we define
$$\gamma_i:\Hc^*(G_i,k)\rightarrow\Hc^*(G,kG)$$
by $\gamma_i:=\tr_{G_i}^G\circ\theta_{g_i}^*$; for example $\gamma_1:\Hc^*(G,k)\rightarrow\Hc^*(G,kG)$ is $\gamma_1=\theta_{g_1}^*=\theta_1^*$, and is a graded $k$-algebra monomorphism. By \cite[Lemma 4.2]{SieWith} there is an isomorphism of graded $k$-vector spaces, an \emph{additive decomposition}
\begin{equation}\label{eqaddit}\Hc^*(G,kG)\rightarrow\bigoplus_{i=1}^s\Hc^*(G_i,k),~~~
[f]\mapsto(\pi_{g_i}^*(\res^G_{G_i}[f]))_{i}
\end{equation}
with its inverse sending $[f_i]\in\Hc^*(G_i,k)$ to $\gamma_i([f_i]).$ For the Product Formula we fix $i,j\in\{1,\ldots, s\}$ and let $D$ be a set of double coset representatives of $G_i\backslash G/G_j$. For each $x\in D$ there is a unique $l=l(x)$ such that
\begin{equation}\label{eqind}
g_l={}^yg_i{}^{yx}g_j
\end{equation}
for some $y\in G$. Let $W:={}^{yx}G_j\cap{}^y G_i$ and let $[f_i]\in\Hc^*(G_i,k),~[f_j]\in\Hc^*(G_j,k)$, then
\begin{equation}\label{eqproductform}
\gamma_i([f_i])\cup\gamma_j([f_j])=\sum_{x\in D}\gamma_l\left(\tr_W^{G_l}(\res_W^{{}^yG_i}(y^*[f_i])\cup\res_W^{{}^{yx}G_j}((yx)^*[f_j]))\right)
\end{equation}
where $y\in G,x\in D$ and $l$ are like in equation (\ref{eqind}) and $y^*:\Hc^*(G_i,k)\rightarrow\Hc^*({}^yG_i,k)$ is the conjugation map in group cohomology.
\begin{lemma}\label{lem23'} With the above notations the following equality holds
$$\bbbeta_G\circ\gamma_i^r=\gamma_i^{r+1}\circ\beta_{G_i},$$
for any $i\in\{1,\ldots, s\}$.
\end{lemma}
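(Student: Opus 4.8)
The plan is a double application of the naturality of connecting homomorphisms, using the factorization $\gamma_i=\tr_{G_i}^G\circ\theta_{g_i}^*$. Throughout the proof $\gamma_i$ is taken with $\Fp$-coefficients, i.e.\ $\gamma_i\colon\Hc^*(G_i,\Fp)\to\Hc^*(G,\Fp G)$ and $\theta_{g_i}^*\colon\Hc^*(G_i,\Fp)\to\Hc^*(G_i,\Fp G)$, which is the setting in which $\bbbeta_G$ of Definition \ref{defBock} lives. The idea is that $\theta_{g_i}^*$ is induced by a coefficient map intertwining the ``scalar'' Bockstein sequence with the sequence of Lemma \ref{lem21} i), while $\tr_{G_i}^G$ commutes with connecting homomorphisms for general reasons; composing the two compatibilities gives the claim.

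First I would record the analogues of $\theta_{g_i}$ with coefficients in $\Fp$ and in $\Z/p^2\Z$: set $\theta_{g_i}(\lambda)=\lambda g_i$ for $\lambda\in\Fp$, resp.\ $\lambda\in\Z/p^2\Z$. Since $g_i$ is centralised by $G_i=C_G(g_i)$ and $G_i$ acts by conjugation, each such $\theta_{g_i}$ is a homomorphism of $\Z G_i$-modules with trivial source. Restricting the short exact sequence of Lemma \ref{lem21} i) along $G_i\le G$ gives a short exact sequence of $\Z G_i$-modules, and comparing the formulas $i_G(\widehat a g_i)=\overline{pa}g_i$ and $\pi_G(\overline a g_i)=\widehat a g_i$ with the scalar maps $\widehat a\mapsto\overline{pa}$, $\overline a\mapsto\widehat a$ shows that the three $\theta_{g_i}$'s form a morphism from the short exact sequence $0\to\Fp\to\Z/p^2\Z\to\Fp\to0$ of trivial $\Z G_i$-modules (the one defining $\beta_{G_i}$) to this restricted sequence. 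Writing $\delta_i\colon\Hc^r(G_i,\Fp G)\to\Hc^{r+1}(G_i,\Fp G)$ for the connecting homomorphism of the restricted sequence, naturality of connecting homomorphisms gives
$$\delta_i\circ\theta_{g_i}^*=\theta_{g_i}^*\circ\beta_{G_i}.$$

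Next I would use that the transfer is compatible with connecting homomorphisms: for any short exact sequence of $\Z G$-modules the maps $\tr_{G_i}^G\colon\Hc^*(G_i,-)\to\Hc^*(G,-)$ form a morphism of the associated long exact sequences. This is standard --- for instance, it follows from the description of $\tr_{G_i}^G$ via the Eckmann--Shapiro isomorphism $\Hc^*(G_i,M)\cong\Ext^*_{\Z G}(\Z G\otimes_{\Z G_i}\Z,M)$ together with the fixed $\Z G$-linear map $\Z G\otimes_{\Z G_i}\Z\to\Z$, both natural in the coefficient module $M$. Applied to the sequence of Lemma \ref{lem21} i) this yields $\bbbeta_G\circ\tr_{G_i}^G=\tr_{G_i}^G\circ\delta_i$ on $\Hc^r(G_i,\Fp G)$. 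Combining the two displays,
$$\bbbeta_G\circ\gamma_i^r=\bbbeta_G\circ\tr_{G_i}^G\circ\theta_{g_i}^*=\tr_{G_i}^G\circ\delta_i\circ\theta_{g_i}^*=\tr_{G_i}^G\circ\theta_{g_i}^*\circ\beta_{G_i}=\gamma_i^{r+1}\circ\beta_{G_i},$$
which is the asserted identity. I do not expect a genuine obstacle: once the factorization $\gamma_i=\tr_{G_i}^G\circ\theta_{g_i}^*$ is in hand the argument is purely formal, and the two hand checks --- that the $\theta_{g_i}$'s give a morphism of short exact sequences of $\Z G_i$-modules, and that transfer commutes with connecting maps --- are routine. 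The only point requiring mild care is keeping the two Bockstein-type maps over $G_i$ apart, namely $\delta_i$ (for $\Fp G$-coefficients) and $\beta_{G_i}$ (for $\Fp$-coefficients), together with the coefficient map $\theta_{g_i}$ intertwining them.
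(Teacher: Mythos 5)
Your proposal is correct and follows essentially the same route as the paper: both establish that the $\theta_{g_i}$'s give a morphism from the trivial-coefficient Bockstein sequence to the restricted sequence of Lemma \ref{lem21} i), apply the $\delta$-functor property of $\Hc^*(G_i,-)$ to get $\delta_i\circ\theta_{g_i}^*=\theta_{g_i}^*\circ\beta_{G_i}$, and then use that transfer is a morphism of $\delta$-functors (your $\delta_i$ is the paper's $\bbbeta_{G_i}$). The only difference is that you supply an Eckmann--Shapiro justification for the transfer step, which the paper takes for granted.
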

\begin{proof}
  It  is easy to verify the commutativity of the following diagram of short exact sequences of $\Z G_i$-modules
\begin{displaymath}
 \xymatrix{0\ar[r]&\Fp\ar[r]\ar[d]^{\theta_{g_i}}&\mathbb{Z}/p^2\ar[r]\ar[d]^{\theta'_{g_i}} & \Fp\ar[d]^{\theta_{g_i}}\ar[r]&0 \\0\ar[r]&\Fp G\ar[r]^{i_G}&\mathbb{Z}/p^2[G]\ar[r]^{\pi_G}& \Fp G\ar[r]&0
                                            },
\end{displaymath}
where the first short exact sequence of trivial $\Z G_i$-modules induces the Bockstein map in group cohomology and $\theta'_{g_i}$ is the similar map to $\theta_{g_i}$ but with the coefficient ring $\mathbb{Z}/p^2$.
Since $\Hc^*(G_i,-)$ is a cohomolgical $\delta$-functor the following diagram is commutative
\begin{displaymath}
 \xymatrix{\Hc^r(G_i,\Fp)\ar[r]^{\beta_{G_i}}\ar[d]^{\theta_{g_i}^r}&\Hc^{r+1}(G_i,\Fp)\ar[d]^{\theta^{r+1}_{g_i}} \\ \Hc^r(G_i,\Fp G)\ar[r]^{\bbbeta_{G_i}}&\Hc^{r+1}(G_i,\Fp G)
                                            },
\end{displaymath}
where $\bbbeta_{G_i}$ is the connecting homomorphism of the down-side short exact sequence (of $\mathbb{Z}G_i$-modules) of the first diagram of this proof.
Since $\tr_{G_i}^G$ is a morphism of $\delta$-functors we also have the commutativity of the following diagram
\begin{displaymath}
 \xymatrix{\Hc^r(G_i,\Fp G)\ar[r]^{\bbbeta_{G_i}}\ar[d]^{\tr_{G_i}^G}&\Hc^{r+1}(G_i,\Fp G)\ar[d]^{\tr_{G_i}^G} \\ \Hc^r(G,\Fp G)\ar[r]^{\bbbeta_{G}}&\Hc^{r+1}(G,\Fp G)
                                            }.
\end{displaymath}
Gluing the above diagrams we obtain
$$\bbbeta_G\circ\tr_{G_i}^G\circ\theta_{g_i}^r=\tr_{G_i}^G\circ\bbbeta_{G_i}\circ\theta_{g_i}^r=\tr_{G_i}^G\circ\theta_{g_{i}}^{r+1}\circ\beta_{G_i},$$
hence
$$\bbbeta_G\circ\gamma_i^r=\gamma_i^{r+1}\circ\beta_{G_i}.$$
\end{proof}

\begin{theorem}\label{thmBockderiv}With the above notations the following statements are true:
\begin{itemize}
\item[i)] $\bbbeta_G\circ\bbbeta_G=0;$
\item[ii)] The Bockstein of a cup product (with respect to the pairing $\mu$) is given by $$ \bbbeta_G([a]\cup[b])=\bbbeta_G([a])\cup[b]+(-1)^{\mathrm{deg}([a])}[a]\cup\bbbeta_G([b]),$$
    for any $[a],[b]\in\Hc^*(G,\Fp G).$ In particular $\Hc^*(G,\Fp G)$ is a DG-algebra.
\item[iii)] There is a $k$-linear map $\bbbeta_G:\Hc^r(G,kG)\rightarrow\Hc^{r+1}(G,kG)$ such that $\Hc^*(G,kG)$ is also a DG-algebra.
    \end{itemize}
\end{theorem}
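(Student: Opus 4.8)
For part i), the plan is to argue exactly as in group cohomology: the composite $\bbbeta_G\circ\bbbeta_G$ is the connecting homomorphism for a short exact sequence that splices two copies of the sequence in Lemma \ref{lem21} i), and this splice factors through a module that makes the composite zero. Concretely, I would observe that $\bbbeta_G$ can be realized (as in \cite[Lemma 4.3.2]{BenII}) as the connecting map associated to $0\to\Fp G\to\Z/p^2[G]\to\Fp G\to 0$, and that the composite of two such connecting maps is controlled by the obstruction class of the four-term sequence $0\to\Fp G\to\Z/p^2[G]\to\Z/p^2[G]\to\Fp G\to 0$; since the middle map $\Z/p^2[G]\to\Z/p^2[G]$ is multiplication by $p$, which is the zero map composed appropriately, the Yoneda product vanishes. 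Alternatively, and more cleanly for this paper, I would simply transport the statement along the additive decomposition \eqref{eqaddit} and Lemma \ref{lem23'}: on the summand $\gamma_i(\Hc^*(G_i,k))$ we have $\bbbeta_G\circ\bbbeta_G\circ\gamma_i^r = \gamma_i^{r+2}\circ\beta_{G_i}\circ\beta_{G_i} = 0$ because $\beta_{G_i}$ is a differential by \cite[Lemma 4.3.2]{BenII}; since the $\gamma_i$ span, $\bbbeta_G\circ\bbbeta_G=0$ on all of $\Hc^*(G,\Fp G)$.

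For part ii), the cleanest route is again via the Product Formula \eqref{eqproductform} together with Lemma \ref{lem23'}. Writing arbitrary classes as sums $[a]=\sum_i\gamma_i([f_i])$, $[b]=\sum_j\gamma_j([f_j])$, I would expand $\bbbeta_G([a]\cup[b])$ using \eqref{eqproductform}, move $\bbbeta_G$ inside each $\gamma_l$ via Lemma \ref{lem23'} (so $\bbbeta_G\gamma_l(z)=\gamma_l(\beta_{G_l}(z))$), then apply the fact that $\beta_{G_l}$ commutes with transfer, with restriction, and with conjugation maps in group cohomology (these are all standard, since $\beta$ is natural and a morphism of $\delta$-functors), and finally invoke the graded Leibniz rule for $\beta$ in ordinary group cohomology, \cite[Lemma 4.3.3]{BenII}, applied to the cup product $\res_W(y^*[f_i])\cup\res_W((yx)^*[f_j])$ inside $\Hc^*(W,k)$. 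Re-collecting the resulting two sums via \eqref{eqproductform} in reverse gives precisely $\bbbeta_G([a])\cup[b]+(-1)^{\deg[a]}[a]\cup\bbbeta_G([b])$. The degree sign works out because $\deg\gamma_i([f_i])=\deg[f_i]$, the decomposition being degree-preserving.

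For part iii), the idea is to extend scalars. Since $k$ has characteristic $p$, we have $kG\cong \Fp G\otimes_{\Fp}k$ as $kG$-modules with $G$ acting by conjugation, and the additive decomposition \eqref{eqaddit} together with the Product Formula \eqref{eqproductform} hold over $k$ verbatim (Siegel--Witherspoon work over any commutative ring). So I would \emph{define} $\bbbeta_G$ on $\Hc^*(G,kG)$ by transporting the map $\bigoplus_i\beta_{G_i}\otimes\id_k$ through the decomposition $\Hc^*(G,kG)\cong\bigoplus_i\Hc^*(G_i,k)$ — equivalently, set $\bbbeta_G:=\sum_i\gamma_i\circ(\beta_{G_i}\otimes\id_k)\circ(\pi_{g_i}^*\circ\res^G_{G_i})$. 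This is manifestly $k$-linear. That $\bbbeta_G\circ\bbbeta_G=0$ is immediate from $\beta_{G_i}^2=0$, and the graded Leibniz rule follows by the same Product-Formula computation as in ii), now using the Leibniz rule for $\beta_{G_i}\otimes\id_k$ on $\Hc^*(W,k)\cong\Hc^*(W,\Fp)\otimes_{\Fp}k$ — which holds because the $k$-valued Bockstein on $\Hc^*(W,k)$ is just the $\Fp$-valued one extended linearly.

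I expect the main obstacle to be purely bookkeeping in part ii): one must check that the indexing data $(x,y,l,W)$ appearing in \eqref{eqproductform} is genuinely unaffected by applying $\beta$, i.e. that all the transfers, restrictions and conjugation isomorphisms in sight commute with the Bockstein, and that the sign $(-1)^{\deg[a]}$ survives the conjugation map $y^*$ (it does, since $y^*$ preserves degree). A secondary subtlety in iii) is to confirm that the map defined via the decomposition is independent of the choice of conjugacy-class representatives $g_i$ — but this follows because $\gamma_i$ and the decomposition are canonical up to the identifications already fixed, and $\beta$ is natural with respect to the conjugation isomorphisms relating different choices.
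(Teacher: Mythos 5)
Your proofs of ii) and iii) follow essentially the same route as the paper: use the additive decomposition \eqref{eqaddit} and Lemma \ref{lem23'} to get an explicit formula $\bbbeta_G(\gamma_i([f_i]))=\gamma_i(\beta_{G_i}([f_i]))$, expand $\bbbeta_G([a]\cup[b])$ via the Product Formula \eqref{eqproductform}, push $\bbbeta_G$ inside $\gamma_l$, and invoke the Leibniz rule for $\beta_W$ together with the compatibility of restriction, transfer and conjugation with the Bockstein; for iii) extend scalars via $\Hc^*(G,\Fp)\otimes_{\Fp}k\cong\Hc^*(G,k)$ and define $\bbbeta_G$ on $\Hc^*(G,kG)$ by the same formula. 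That is exactly what the paper does.

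Your preferred argument for i) is, however, genuinely different from the paper's. The paper proves $\bbbeta_G\circ\bbbeta_G=0$ by a diagram chase: it compares the two short exact sequences of Lemma \ref{lem21} via a commutative diagram of $\Z G$-modules, concludes $\bbbeta_G=(\pi'_G)^*\circ\widehat{\bbbeta}_G$, and then uses that $\widehat{\bbbeta}_G\circ(\pi'_G)^*=0$ by exactness of the long exact sequence coming from the integral-coefficient sequence $0\to\Z G\to\Z G\to\Fp G\to 0$. You instead transport the statement through the additive decomposition: $\bbbeta_G\circ\bbbeta_G\circ\gamma_i^r=\gamma_i^{r+2}\circ\beta_{G_i}\circ\beta_{G_i}=0$ by Lemma \ref{lem23'} and $\beta_{G_i}^2=0$, and the images of the $\gamma_i$ span by \eqref{eqaddit}. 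Both are correct. The paper's approach is self-contained at the level of connecting homomorphisms and shows $\bbbeta_G^2=0$ directly from Definition \ref{defBock}, without relying on the Siegel--Witherspoon machinery; your approach is shorter and uniform with the rest of the proof (it uses the same decomposition as ii) and iii)), but it quietly presupposes that the connecting homomorphism $\bbbeta_G$ agrees with the decomposition-defined map everywhere, which is exactly what Lemma \ref{lem23'} plus surjectivity of $\bigoplus_i\gamma_i$ supplies. Your first sketched alternative (the Yoneda/splice argument) is also morally the paper's argument in disguise, though the phrase ``multiplication by $p$'' as the middle map is not quite how the paper's diagram is set up: the paper's top row is $0\to\Z G\xrightarrow{p}\Z G\to\Fp G\to 0$ mapping down to $0\to\Fp G\to\Z/p^2[G]\to\Fp G\to 0$, and the vanishing comes from $\widehat{\bbbeta}_G\circ(\pi'_G)^*=0$.
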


\begin{proof}
 \begin{itemize}
  \item[i)] From Lemma \ref{lem21}, Definition \ref{defBock} and the long exact sequence theorem we obtain that $\widehat{\bbbeta}_G\circ(\pi'_G)^*=0$. Next, we consider the commutative diagram of $\Z G$-modules
      \begin{displaymath}
 \xymatrix{0\ar[r]&\Z G\ar[r]^{i'_G}\ar[d]^{\pi'_G}&\Z G\ar[r]^{\pi'_G}\ar[d]^{\pi''_G} & \Fp G\ar[d]^{\id_{\Fp G}}\ar[r]&0 \\0\ar[r]&\Fp G\ar[r]^{i_G}&\mathbb{Z}/p^2[G]\ar[r]^{\pi_G}& \Fp G\ar[r]&0
                                            },
\end{displaymath}
where $\pi''_G$ is the similar map to $\pi'_G$ but with coefficients in $\mathbb{Z}/p^2$ for the codomain.
Since $\Hc^*(G,-)$ is a cohomological $\delta$-functor we obtain the commutative diagram
\begin{displaymath}
 \xymatrix{\Hc^*(G,\Fp G)\ar[r]^{\widehat{\bbbeta}_{G}}\ar[d]^{\id}&\Hc^{*+1}(G,\Z G)\ar[d]^{(\pi'_G)^{*+1}} \\ \Hc^*(G,\Fp G)\ar[r]^{\bbbeta_{G}}&\Hc^{*+1}(G,\Fp G)
                                            }.
\end{displaymath}

 Now, $\bbbeta_G=(\pi'_G)^*\circ\widehat{\beta}_G$ hence
 $$\bbbeta_G\circ \bbbeta_G=(\pi'_G)^{*+1}\circ\widehat{\bbbeta}_G\circ(\pi'_G)^{*+1}\circ\widehat{\bbbeta}_G=0.$$
 \item[ii)]
 The isomorphism (\ref{eqaddit}) for $k$ replaced with $\Fp$ and Lemma \ref{lem23'} give an explicit definition for $\bbbeta_G$, since by (\ref{eqaddit}) we will take $$\{\gamma_i([f_i])\mid i\in\{1,\ldots, s\},~[f_i]\in\Hc^*(G_i,\Fp)\}$$ as a set of generators for $\Hc^*(G,\Fp G)$ and then
 $$\bbbeta_G(\gamma_i([f_i]))=\gamma_i(\beta_{G_i}([f_i])),$$
 for any $i\in\{1,\ldots,s\}$ and any $[f_i]\in\Hc^*(G_i,\Fp).$ Let $[a]=\gamma_i([f_i])$ and $[b]=\gamma_j([f_j])$ be two homogeneous elements in $\Hc^*(G,\Fp G)$, where $i,j\in\{1,\ldots,s\}$ and $[f_i]\in\Hc^*(G_i,\Fp),$$[f_j]\in\Hc^*(G_j,\Fp)$. For shortness, we denote by $m$ the degree $\mathrm{deg}(\res_W^{{}^yG_i}y^*([f_i]))$ in the following equalities. From (\ref{eqproductform}), using the same notations and Lemma \ref{lem23'},  we have
 \begin{align*}
   \bbbeta_G&([a]\cup [b])\\&=\sum_{x\in D}\bbbeta_G\left[\gamma_l\left(\tr_W^{G_l}(\res_W^{{}^yG_i}y^*([f_i])\cup\res_W^{{}^{yx}G_j}(yx)^*([f_j]))\right)\right]\\
   &=\sum_{x\in D}\gamma_l\left[\beta_{G_l}\left(\tr_W^{G_l}(\res_W^{{}^yG_i}y^*([f_i])\cup\res_W^{{}^{yx}G_j}(yx)^*([f_j]))\right)\right]\\
   &=\sum_{x\in D}\gamma_l\left[\tr_W^{G_l}\left(\beta_W(\res_W^{{}^yG_i}y^*([f_i])\cup\res_W^{{}^{yx}G_j}(yx)^*([f_j]))\right)\right]\\
   &=\sum_{x\in D}\gamma_l\left[\tr_W^{G_l}\left(\beta_W(\res_W^{{}^yG_i}y^*([f_i]))\cup\res_W^{{}^{yx}G_j}(yx)^*([f_j])\right.\right.+ \\
   &\left.\left.+(-1)^{m}\res_W^{{}^yG_i}y^*([f_i])\cup\beta_W(\res_W^{{}^{yx}G_j}(yx)^*([f_j]))\right)\right]
   \\
   &=\sum_{x\in D}\gamma_l\left[\tr_W^{G_l}\left(\res_W^{{}^yG_i}(y^*(\beta_{G_i}[f_i]))\cup\res_W^{{}^{yx}G_j}(yx)^*([f_j])\right.\right.+ \\
   &\left.\left.+(-1)^{m}\res_W^{{}^yG_i}y^*([f_i])\cup\res_W^{{}^{yx}G_j}((yx)^*(\beta_{G_j}[f_j]))\right)\right]
   \\
   &=\sum_{x\in D}\gamma_l\left[\tr_W^{G_l}\left(\res_W^{{}^yG_i}(y^*(\beta_{G_i}[f_i]))\cup\res_W^{{}^{yx}G_j}(yx)^*([f_j])\right)\right]+ \\
   &+(-1)^{m}\sum_{x\in D}\gamma_l\left[\tr_W^{G_l}\left(\res_W^{{}^yG_i}y^*([f_i])\cup\res_W^{{}^{yx}G_j}((yx)^*(\beta_{G_j}[f_j]))\right)\right]\\
   &=\gamma_i(\beta_{G_i}[f_i])\cup\gamma_j([f_j])+(-1)^{\mathrm{deg}([f_i])}\gamma_i([f_i])\cup\gamma_j(\beta_{G_j}([f_j]))\\
   &=\bbbeta_G([a])\cup[b]+(-1)^{\mathrm{deg}([a])}[a]\cup\bbbeta_G([b]),
 \end{align*}
 where the fourth equality is true since $\beta_W$ is a graded derivation; some of the other equalities are true since restriction, transfer and conjugation map are homomorphisms of $\delta$-functors.
 \item[iii)]
 The field $\Fp$ is a subfield of $k$, hence $k$ is an $\Fp$-algebra. Then there is an isomorphism of graded $k$-algebras
$$\Hc^*(G,\Fp )\otimes_{\Fp}k\cong \Hc^*(G,k),$$
see \cite[p. 30]{Ev} for an explicit description of this isomorphism. This isomorphism induces a $k$-linear map $\beta_G:\Hc^r(G,k)\rightarrow\Hc^{r+1}(G,k),$
  for which by abuse of notation we use the same notation as in the case of $\Hc^*(G,\Fp)$ and is obtained as the tensor product over $\Fp$ of the above  Bockstein map  with $\id_k$. Obviously $\beta_G$ is a differential and a graded derivation. The isomorphism (\ref{eqaddit}) for $k$ assure us that we may give the same explicit definition given in the proof of statement ii) for a Bockstein homomorphism in $\Hc^*(G,kG)$
$$\bbbeta_G:\Hc^r(G,kG)\rightarrow\Hc^{r+1}(G,kG)$$
$$\bbbeta_G(\gamma_i([f_i]))=\gamma_i(\beta_{G_i}([f_i])),$$
 for any $i\in\{1,\ldots,s\}$ and any $[f_i]\in\Hc^*(G_i,k).$ The same computations as in the proof of statement ii) assure us that $\bbbeta_G$ is a differential and a graded derivation.
 \end{itemize}
\end{proof}
\begin{remark} We see that in the proof of statement iii) of the above theorem we have used the extension of the Bockstein map from $\Hc^*(G,\Fp)$ to a $k$-linear map, which we will call still the Bockstein map of $\Hc^*(G,k)$, via the isomorphism of graded $k$-algebras $$\Hc^*(G,\Fp )\otimes_{\Fp}k\cong \Hc^*(G,k),$$ as in \cite[p.133]{BenII} or in \cite[p.30]{Ev}. But in a revised version of \cite{BenII} the author noticed that this extension is not a good thing to do since it caused problems for \cite[Section 5.8]{BenII}. Instead he uses a semi-linear extension via the Frobenius map of the Bockstein map from cohomology with coefficients in $\Fp$ to the cohomology with coefficients in $k$. It is possible that many of the above and next results to remain true by considering this map. However for our purposes the considered extension from \ref{thmBockderiv}, iii) is good and it will be used for the rest of this paper. Also notice that from Section \ref{sec3} we will work, for simplicity, with an algebraically closed field $k$ of  characteristic $p$ and in this case these two maps are quite similar, see \cite[Exercise 3.4.2]{Ev}.
\end{remark}
\begin{remark}\label{remexplbockhochsch}
  From Definition \ref{defBock} we notice that if $k=\Fp$, Bockstein homomorphisms for Hochschild cohomology of group algebras are defined as the connecting homomorphisms in some long exact sequences, however in general, for any field $k$ of prime characteristic, from the proof of Theorem \ref{thmBockderiv}, iii) we see that we can define Bockstein homomorphism $$\bbbeta_G:\Hc^r(G,kG)\rightarrow\Hc^{r+1}(G,kG)$$
$$\bbbeta_G(\gamma_i([f_i]))=\gamma_i(\beta_{G_i}([f_i])),$$
 for any $i\in\{1,\ldots,s\}$ and any $[f_i]\in\Hc^*(G_i,k)$, only using Siegel-Witherspoon additive decomposition. We proved its properties using Product Formula and properties of Bockstein homomorphisms for group cohomology with coefficients in $k$.
\end{remark}
\section{Hochschild cohomology of block algebras and Bockstein homomorphisms}\label{sec3}
In this section let $k$ be an algebraically closed field of characteristic $p$ and $\mathcal{B}=\{b| b\in Z(kG)\}$  be the finite set of block idempotents of the group algebra $kG$. Each block idempotent determines a block algebra $kGb$, which is an indecomposable factor of $kG$ as $k$-algebra.  We have a well-known decomposition
$$kG=\bigoplus_{b\in\mathcal{B}}kGb$$ as $k[G\times G]$-modules, where the action of $G\times G$ on $kG$ is given by $(g,h)a=gah^{-1}$ for any $g,h\in G,a\in kG$. It follows that $$kG=\bigoplus_{b\in\mathcal{B}}kGb$$
as $kG$-modules, where $G$ acts by conjugation, hence
$$\Hc^*(G,kG)\cong \bigoplus_{b\in\mathcal{B}}\Hc^*(G,kGb)$$ as graded $k$-algebras. In particular, for the fixed non-negative integer $r$ we have
$$\Hc^r(G,kG)\cong \bigoplus_{b\in\mathcal{B}}\Hc^r(G,kGb)$$
as $k$-vector spaces. Next, fix a block algebra from $\{kGb| b\in\mathcal{B}\}$, which we denote by $B=kGb$, where $b$ is its fixed block idempotent. Again $B$ is a $kG$-module by conjugation, since we identify $kG\cong k\Delta G$, where $\Delta G\leq G\times G$ is the diagonal subgroup. We define the following $kG$-module homomorphisms
$$\mathbb{i}_{G,B}:B\hookrightarrow kG$$
$$\mathbb{p}_{G,B}:kG\rightarrow B,$$
where $\mathbb{i}_{G,B}$ is the  canonical inclusion map and $\mathbb{p}_{G,B}$ is the canonical projection, obtained by multiplication with $b$. Clearly $\mathbb{p}_{G,B}\circ \mathbb{i}_{G,B}=\id_B$ and these maps induce the following maps in cohomology:

$$\mathbb{i}_{G,B}^*:\Hc^*(G,B)\rightarrow\Hc^*(G,kG)$$
$$\mathbb{p}_{G,B}^*:\Hc^*(G,kG)\rightarrow\Hc^*(G,B)$$
such that $\mathbb{p}_{G,B}^*\circ \mathbb{i}_{G,B}^*=\id_{\Hc^*(G,B)}.$
Moreover, since $\mathbb{i}_{G,B},\mathbb{p}_{G,B}$ are homomorphisms of $k$-algebras as well, we have that $\mathbb{i}_{G,B}^*,\mathbb{p}_{G,B}^*$ are homomorphisms of graded commutative $k$-algebras, where $\Hc^*(G,B)$ is again an associative graded algebra with cup product with respect to the pairing $\mu$; here $\mu$ is the multiplication map in $B$.
\begin{definition}\label{defnBockHHbloc}
The \emph{Bockstein homomorphism in Hochschild cohomology of a block algebra} $B$ is the map
$$\bbbeta_{G,B}:\Hc^r(G,B)\rightarrow\Hc^{r+1}(G,B)$$
given by
$$\bbbeta_{G,B}([f])=\mathbb{p}_{G,B}^{r+1}(\bbbeta_G(\mathbb{i}_{G,B}^r([f]))$$
for any  $[f]\in\Hc^r(G,B)$.
\end{definition}
With the notations from Section \ref{sec2} for each $i\in\{1,\ldots, s\}$ we define the homomorphism of graded $k$-vector spaces
 $$\gamma_{i,G,B}:=\mathbb{p}_{G,B}^*\circ \gamma_i$$
  $$\gamma_{i,G,B}:\Hc^*(G_i,k)\rightarrow\Hc^*(G,B)$$

  and $$\Hc_B^*(G_i,k):=\Ima(\pi_{g_i}^*\circ\res^G_{G_i}\circ \mathbb{i}_{G,B}^*)$$ as a graded $k$-subspace of $\Hc^*(G_i,k)$. Since $\pi_{g_i}$ is only additive in general, $\Hc^*_B(G_i,k)$ is not necessarily a subring of $\Hc^*(G_i,k)$.

\begin{proposition}\label{prop31addithochbloc} With the above notations the map
$$\Phi:\Hc^*(G,B)\rightarrow\bigoplus_{i=1}^s\Hc_B^*(G_i,k)$$
$$\Phi([f])=\pi_{g_i}^*(\res^G_{G_i}(\mathbb{i}_{G,B}^*[f])),~~~[f]\in\Hc^*(G,B)$$
is an isomorphism of graded $k$-vectors spaces with its inverse sending an element $[f_i]\in\Hc_B^*(G_i,k)$ to $\gamma_{i,G,B}([f_i]).$
\end{proposition}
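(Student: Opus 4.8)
The plan is to verify directly that $\Phi$ and the map
$$\Psi:\bigoplus_{i=1}^s\Hc_B^*(G_i,k)\longrightarrow\Hc^*(G,B),\qquad
\Psi\big(([f_i])_{i}\big)=\sum_{i=1}^s\gamma_{i,G,B}([f_i])$$
described in the statement are mutually inverse. Both maps are graded $k$-linear; $\Psi$ is well defined because each $\Hc_B^*(G_i,k)$ is a graded $k$-subspace of $\Hc^*(G_i,k)$, and $\Phi$ takes values in $\bigoplus_i\Hc_B^*(G_i,k)$ by the very definition of $\Hc_B^*(G_i,k)$ as the image of the $i$-th component of $\Phi$. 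It then remains to compute the two composites.

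For $\Psi\circ\Phi$ I would first record the reformulation of the additive decomposition (\ref{eqaddit}): since its inverse sends $([g_i])_i$ to $\sum_i\gamma_i([g_i])$, one has $\sum_{i=1}^s\gamma_i\big(\pi_{g_i}^*(\res^G_{G_i}[g])\big)=[g]$ for every $[g]\in\Hc^*(G,kG)$. Applying this with $[g]=\mathbb{i}_{G,B}^*[f]$ for $[f]\in\Hc^*(G,B)$, and using $\gamma_{i,G,B}=\mathbb{p}_{G,B}^*\circ\gamma_i$ together with $\mathbb{p}_{G,B}^*\circ\mathbb{i}_{G,B}^*=\id_{\Hc^*(G,B)}$, gives
$$\Psi\big(\Phi([f])\big)
=\mathbb{p}_{G,B}^*\Big(\sum_{i=1}^s\gamma_i\big(\pi_{g_i}^*(\res^G_{G_i}(\mathbb{i}_{G,B}^*[f]))\big)\Big)
=\mathbb{p}_{G,B}^*\big(\mathbb{i}_{G,B}^*[f]\big)=[f].$$
Hence $\Psi\circ\Phi=\id_{\Hc^*(G,B)}$, and in particular $\Phi$ is injective. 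This half uses only the Siegel--Witherspoon decomposition of \cite{SieWith} and the splitting $\mathbb{p}_{G,B}^*\mathbb{i}_{G,B}^*=\id$, and should be routine.

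The other composite $\Phi\circ\Psi=\id$ --- equivalently, surjectivity of $\Phi$ onto $\bigoplus_i\Hc_B^*(G_i,k)$ --- is where the main work lies, and is, I believe, the place for the ``trick'' referred to in the introduction. Put $e:=\mathbb{i}_{G,B}^*\circ\mathbb{p}_{G,B}^*$, an idempotent on $\Hc^*(G,kG)$ with image the block summand $\Hc^*(G,kGb)\cong\Hc^*(G,B)$, and $P_i:=\gamma_i\circ\pi_{g_i}^*\circ\res^G_{G_i}$, so that $\sum_i P_i=\id$ by (\ref{eqaddit}). Expanding as before, for $([f_i])_i$ the $j$-th coordinate of $\Phi(\Psi(([f_i])_i))$ equals $\pi_{g_j}^*\big(\res^G_{G_j}\big(e\big(\sum_i\gamma_i([f_i])\big)\big)\big)$, whereas $\pi_{g_j}^*\big(\res^G_{G_j}\big(\sum_i\gamma_i([f_i])\big)\big)=[f_j]$ by (\ref{eqaddit}). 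Hence $\Phi\circ\Psi=\id$ is equivalent to the claim that $\sum_i\gamma_i([f_i])$ is fixed by $e$, i.e. lies in $\Hc^*(G,kGb)$, whenever each $[f_i]\in\Hc_B^*(G_i,k)$. Writing $[f_i]=\pi_{g_i}^*(\res^G_{G_i}(\mathbb{i}_{G,B}^*[h_i]))$ with $[h_i]\in\Hc^*(G,B)$ shows that $\gamma_i([f_i])=P_i(\mathbb{i}_{G,B}^*[h_i])$ is $P_i$ applied to an element of $\Hc^*(G,kGb)$, so the claim reduces to showing that the projections $P_i$ preserve the block summand $\Hc^*(G,kGb)$ of $\Hc^*(G,kG)$. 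I expect this to be the main obstacle: a priori the block decomposition $\Hc^*(G,kG)=\bigoplus_b\Hc^*(G,kGb)$ need not be compatible with the decomposition $\Hc^*(G,kG)=\bigoplus_i\gamma_i(\Hc^*(G_i,k))$, so the idempotent $e$ (multiplication by $b$ on coefficients) need not commute with the $P_i$; making this step rigorous --- via the explicit description of $\Hc_B^*(G_i,k)$ and naturality of $\res$, $\tr$, $\pi_{g_i}$ and $\theta_{g_i}$, compare the diagrams in the proof of Lemma \ref{lem23'} --- is the part I would work out most carefully.
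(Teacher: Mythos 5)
Your verification that $\Psi\circ\Phi=\id_{\Hc^*(G,B)}$ is correct, and it is the only computation the paper actually carries out. The paper's proof consists of that computation plus the unargued assertion that ``clearly $\Phi$ is well-defined and surjective''. You are right to single out surjectivity of $\Phi$ (equivalently $\Phi\circ\Psi=\id$) as the genuine content, and your reduction of it to the question of whether the Siegel--Witherspoon idempotents $P_i=\gamma_i\circ\pi_{g_i}^*\circ\res^G_{G_i}$ preserve the block summand $\mathbb{i}_{G,B}^*(\Hc^*(G,B))\subset\Hc^*(G,kG)$ is exactly the right way to see what would be required.

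The caution is well founded: that compatibility fails, and the proposition as stated is false. Already in degree $0$ the Siegel--Witherspoon isomorphism identifies $\Hc^0(G,kG)=Z(kG)$ with $k^s$ via the coefficients on the conjugacy class sums $\widehat{C}_1,\dots,\widehat{C}_s$, and $P_i$ becomes the coordinate projection onto $\widehat{C}_i$; multiplication by the block idempotent $b$ need not be diagonal in this basis. Concretely take $G=S_3$, $p=2$, $g_1=1$, $g_2=(12)$, $g_3=(123)$, and $B$ the principal block with $b=1+(123)+(132)=1+\widehat{C}_3$. One checks that
$$Z(B)=\{\lambda\cdot 1+\mu\widehat{C}_2+\lambda\widehat{C}_3\ :\ \lambda,\mu\in k\},$$
which is $2$-dimensional, while $\pi_{g_1}^0,\pi_{g_2}^0,\pi_{g_3}^0$ send such an element to $\lambda,\mu,\lambda$ respectively, so each $\Hc^0_B(G_i,k)=k$ and $\bigoplus_{i=1}^3\Hc^0_B(G_i,k)\cong k^3$. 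Thus $\Phi$ has $2$-dimensional source and $3$-dimensional target in degree $0$ and cannot be surjective; equivalently, for $\lambda_1\neq\lambda_3$ the element $\sum_i\gamma_i(\lambda_i)=\lambda_1+\lambda_2\widehat{C}_2+\lambda_3\widehat{C}_3$ is not fixed by $e=\mathbb{i}_{G,B}^*\circ\mathbb{p}_{G,B}^*$ (multiplication by $b$ sends it to $\lambda_1+\lambda_2\widehat{C}_2+\lambda_1\widehat{C}_3$). What does survive, and what the paper actually uses afterwards, is only the half you proved: $\Psi\circ\Phi=\id$, i.e.\ $\Phi$ is a split injection and the elements $\gamma_{i,G,B}([f_i])$, $[f_i]\in\Hc^*_B(G_i,k)$, generate $\Hc^*(G,B)$.
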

\begin{proof}
  Clearly $\Phi$ is well-defined and surjective. Since
  $$\gamma_{i,G,B}(\Phi([f]))=\gamma_{i,G,B}[\pi_{g_i}^*(\res^G_{G_i}(\mathbb{i}_{G,B}^*([f])))]$$
  $$=\mathbb{p}_{G,B}^*\left((\gamma_i\circ\pi_{g_i}^*\circ\res^G_{G_i})(\mathbb{i}_{G,B}^*([f]))\right)$$
  $$=(\mathbb{p}_{G,B}^*\circ \mathbb{i}_{G,B}^*)([f])=[f],$$
  for any $[f]\in\Hc^*(G,B)$ we obtain the conclusion.
\end{proof}
From Proposition \ref{prop31addithochbloc} we obtain that the set
$$\{\gamma_{i,G,B}([f_i])|i\in\{1,\ldots,s\},[f_i]\in\Hc^r_B(G_i,k)\}$$ is a $k$-generating set for $\Hc^r(G,B)$. This allows us to state the next remark.

\begin{remark}
It is obvious that by Proposition \ref{prop31addithochbloc} an equivalent formulation for $\bbbeta_{G,B}$ is $$\bbbeta_{G,B}(\gamma_{i,G,B}[f_i])=\gamma_{i,G,B}(\beta_{G_i}([f_i]))$$
for any $i\in\{1,\ldots,s\}$ and any $[f_i]\in\Hc^r_B(G_i,k)$. Equivalently
$$\bbbeta_{G,B}(\gamma_{i,G,B}[f_i])=\mathbb{p}_{G,B}^{r+1}(\bbbeta_G(\gamma_i([f_i])))$$
for any $i\in\{1,\ldots,s\}$ and $[f_i]\in\Hc^r_B(G_i,k)$.
Theorem \ref{thmBockderiv} and the fact that $\mathbb{p}_{G,B}^*,\mathbb{i}_{G,B}^*$ are homomorphisms of graded $k$-algebras assure us that $\bbbeta_{G,B}$ is  also a graded derivation.
\end{remark}

We end this section with a remark regarding Bockstein homomorphisms for Linckelmann cohomology of block algebras. Let $P_{\delta}$ be a defect pointed group in $G_{\{b\}}$ and $(P,e_P)$ be an associated maximal $b$-Brauer pair. It is well known that for any $Q\leq P$ there is a unique block $e_Q$ in $kC_G(Q)$ such that $(Q,e_Q)$ is a subpair of $(P,e_P)$ and the set of $b$-Brauer pairs is a $G$-set. Block cohomology was introduced by Linckelmann in \cite{LiTr} and is denoted by $\Hc^*(G,b,P_{\delta})$. Recall that $\Hc^*(G,b,P_{\delta})$ is formed by all elements $[\zeta]\in\Hc^*(P,k)$ such that
$$\res^P_Q([\zeta])=(g^*\circ\res^P_Q)([\zeta]),$$
for any $g\in N_G(Q,e_Q)$. Moreover as a graded $k$-algebra, we have the equality
$$\Hc^*(G,b,P_{\delta})=\bigoplus_{r\geq 0}\Hc^r(G,b,P_{\delta}).$$
\begin{remark}\label{remdefnbockcohombl} Restriction and conjugation are homomorphisms of cohomological $\delta$-functors, hence for $k=\Fp$ we know that $\res^P_Q$ and $g^*$ commutes with  $\beta_P$, respectively $\beta_Q$ for any $Q\leq P$ and $g\in N_G(Q)$. But also for any field $k$ of characteristic $p$ it is easy to prove that these properties hold, since $\res^P_Q\otimes_{\Fp}\id_k$ and $g^*\otimes_{\Fp}\id_k$ can be identified with $\res^P_Q$ respectively $g^*$. From the proof of Theorem \ref{thmBockderiv}, iii) recall that $\beta_P:\Hc^r(P,k)\rightarrow \Hc^{r+1}(P,k)$ is the tensor product over $\Fp$ of the  Bockstein map for $\Fp$  with $\id_k$, it follows that there is a well-defined homomorphism
$$\beta_{G,b}:\Hc^r(G,b,P_{\delta})\rightarrow\Hc^{r+1}(G,b,P_{\delta}),$$
which is the restriction of $\beta_P$ to $\Hc^*(G,b,P_{\delta})$. Since $\beta_P$ is a differential and a graded derivation we have that $\beta_{G,b}$ is also a differential and a graded derivation.
\end{remark}
\section{A Product Formula for cohomology of defect groups with coefficients in source algebras}\label{sec4}
Before obtaining the main results of this section let us introduce some notations and collect some properties in the first lemma of this section, without proof, for some subgroups in $\Delta P$. For any subgroup $P\leq G$ let $x\in G,(u,v)\in P\times P$. We denote the subgroups
$$Q_x:=\{(a,a^x)|a\in P\cap{}^xP\}$$
and $Q_{u,v,x}:=\Delta P\cap {}^{(u,v)}Q_x.$ If $w\in \Delta P$, by abuse of notation, we sometimes denote $w:=(w,w)$.
\begin{lemma}\label{lemproprQuvx} Let $x\in G,w\in\Delta P$ and $(u,v)\in P\times P$. The following statements hold.
\begin{itemize}
\item[i)] $Q_{u,v,x}=\Delta P\cap{}^{(ux,v)}\Delta(P^x\cap P)=\Delta P\cap {}^{(u,vx^{-1})}\Delta(P\cap {}^xP);$
\item[ii)] Let $(a,a^x)\in Q_x$ where $a\in P\cap{}^x P$. Then $Q_{wu,wv,x}={}^wQ_{u,v,x}$ and $Q_{ua,va^x,x}=Q_{u,v,x};$
\item[iii)] If $w\in Q_{u,v,x}$ then $w\in C_P(uxv^{-1})$.
\end{itemize}
\end{lemma}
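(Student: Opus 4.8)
The plan is to reduce all three statements to one elementary observation: the subgroup $Q_x$ is a conjugate of a diagonal subgroup. Writing $b:=a^x=x^{-1}ax$, as $a$ runs over $P\cap{}^xP$ the element $b$ runs over $x^{-1}(P\cap{}^xP)x=P^x\cap P$, and a short computation with the conventions ${}^ga=gag^{-1}$, $a^g=g^{-1}ag$ gives
$$Q_x=\{({}^xb,b)\mid b\in P^x\cap P\}={}^{(x,1)}\Delta(P^x\cap P)={}^{(1,x^{-1})}\Delta(P\cap{}^xP),$$
where $G\times G$ acts on itself by conjugation, so that ${}^{(c,d)}(g,h)=({}^cg,{}^dh)$. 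In particular $Q_x$ is a subgroup of $P\times P$.

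For i), I would conjugate the two descriptions of $Q_x$ above by $(u,v)$. Since $(u,v)(x,1)=(ux,v)$ and $(u,v)(1,x^{-1})=(u,vx^{-1})$ inside $G\times G$, this gives
$${}^{(u,v)}Q_x={}^{(ux,v)}\Delta(P^x\cap P)={}^{(u,vx^{-1})}\Delta(P\cap{}^xP),$$
and intersecting with $\Delta P$ yields the two claimed equalities for $Q_{u,v,x}$.

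For ii), the key point is that $w\in P$ forces ${}^{(w,w)}\Delta P=\Delta({}^wP)=\Delta P$, whence
$$Q_{wu,wv,x}=\Delta P\cap{}^{(w,w)}\bigl({}^{(u,v)}Q_x\bigr)={}^{(w,w)}\Delta P\cap{}^{(w,w)}\bigl({}^{(u,v)}Q_x\bigr)={}^w\bigl(\Delta P\cap{}^{(u,v)}Q_x\bigr)={}^wQ_{u,v,x}.$$
For the second equality in ii), I would observe that $(a,a^x)$ lies in the group $Q_x$ and hence normalizes it, so ${}^{(ua,va^x)}Q_x={}^{(u,v)}\bigl({}^{(a,a^x)}Q_x\bigr)={}^{(u,v)}Q_x$, and intersecting with $\Delta P$ gives $Q_{ua,va^x,x}=Q_{u,v,x}$. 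For iii), take $(w,w)\in Q_{u,v,x}$; by i) write $(w,w)={}^{(ux,v)}(b,b)$ with $b\in P^x\cap P$, so that $w={}^{ux}b$ and $w={}^vb$. Substituting $b=v^{-1}wv$ into the first identity gives $w=ux\,v^{-1}wv\,x^{-1}u^{-1}=(uxv^{-1})\,w\,(uxv^{-1})^{-1}$, i.e. $w$ commutes with $uxv^{-1}$; together with $w\in P$ this is precisely $w\in C_P(uxv^{-1})$.

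I do not expect any genuine obstacle: the content is purely formal. The only point demanding care is the bookkeeping of the left/right positions of $x,x^{-1},u,v$ forced by the conventions ${}^ga=gag^{-1}$, $a^g=g^{-1}ag$ and by the diagonal action of $G\times G$ on itself — for instance one must check that the twisting element appearing in iii) is $uxv^{-1}$ and not one of its variants, and that the rewriting $Q_x={}^{(x,1)}\Delta(P^x\cap P)$ has the conjugating pair on the correct side.
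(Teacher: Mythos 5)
Your proof is correct. The paper states this lemma \emph{without proof}, so there is no official argument to compare against; but your reduction of everything to the single observation
\[
Q_x={}^{(x,1)}\Delta(P^x\cap P)={}^{(1,x^{-1})}\Delta(P\cap{}^xP)
\]
is exactly the right move, and each of the three parts then falls out by elementary conjugation bookkeeping. I checked the delicate points: the change of variables $b=a^x$ does carry $P\cap{}^xP$ onto $P^x\cap P$; the product formulas $(u,v)(x,1)=(ux,v)$ and $(u,v)(1,x^{-1})=(u,vx^{-1})$ give the two descriptions in i); the identity ${}^{(w,w)}\Delta P=\Delta P$ for $w\in P$ is what makes the intersection commute with conjugation in ii); the normalization argument for $(a,a^x)\in Q_x$ is valid; and in iii) eliminating $b$ between $w={}^{ux}b$ and $w={}^vb$ does produce $w=(uxv^{-1})w(uxv^{-1})^{-1}$, with $w\in P$ coming from $(w,w)\in\Delta P$. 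No gaps.
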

From now, for the rest of this section, we consider as in the previous section a block algebra $B=kGb$ with defect pointed group $P_{\delta}$. Let $i\in\delta $ be a source idempotent, then $ikGi$ is a $P$-interior algebra, called the \emph{source algebra} of the block $b$. In particular $ikGi$ is also a $kP-kP$-bimodule, hence a left $k[P\times P]$-module by $(u,v)(imi)=iumv^{-1}i$, for any $u,v\in P,m\in kG$. By \cite[Theorem 44.3]{TH}  we have a decomposition
$$ikGi\cong \bigoplus_{x\in Y_{G,b,P_{\delta}}}k[PxP]$$
as indecomposable $k[P\times P]$-modules, where $Y_{G,b,P_{\delta}}$ is a subset of a system of representatives of double cosets of $P$ in $G$. We denote this isomorphism by
$$s:ikGi\rightarrow \bigoplus_{x\in Y_{G,b,P_{\delta}}}k[PxP],$$
and its inverse by $s^{-1}$.
For $x\in Y_{G,b,P_{\delta}}$, any element $(u,v)\in P\times P$ and $W\leq Q_{u,v,x}$,  by Lemma \ref{lemproprQuvx}, iii) we define the following homomorphisms of $kW$-modules

\begin{equation}
\begin{split}
&\theta_{u,v,x}:k\rightarrow ikGi \\
\theta_{u,v,x}(\alpha)&=\alpha s^{-1}(uxv^{-1}),~~~\alpha\in k
\end{split}
\end{equation}
\begin{equation}
\begin{split}
  &\pi_{u,v,x}:ikGi\rightarrow k\\
  \pi_{u,v,x}(m)=\delta_{u,v,x}&(\pi_x(s(m))),~~~m\in ikGi
  \end{split}
\end{equation}

where $\delta_{u,v,x}:k[PxP]\rightarrow k$ is given by
$$\delta_{u,v,x}(\mathfrak{u}x\mathfrak{v})=\left\{ \begin{array}{ll}
1,\quad \text{if}~(\mathfrak{u},\mathfrak{v}^{-1})\in Q_{u,v,x}(u,v)Q_x\\
0,\quad \text{if}~(\mathfrak{u},\mathfrak{v}^{-1})\notin Q_{u,v,x}(u,v)Q_x
\end{array} \right.$$ and $\pi_x:\bigoplus_{x\in Y_{G,b,P_{\delta}}}k[PxP]\rightarrow k[PxP]$ is the $k[P\times P]$-module canonical projection.
\begin{lemma}\label{lemproprtetapi} Let $x\in Y_{G,b,P_{\delta}},(u,v)\in P\times P,w\in Q_{u,v,x}$ and $(a,a^x)\in Q_x$. The following statements hold.
\begin{itemize}
  \item[i)] $\theta_{wua,wva^x,x}=\theta_{u,v,x};$
  \item[ii)]$\pi_{wua,wva^x,x}=\pi_{u,v,x}.$
\end{itemize}
\end{lemma}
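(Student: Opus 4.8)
The statement to prove is Lemma~\ref{lemproprtetapi}, which asserts two invariance properties of the maps $\theta_{u,v,x}$ and $\pi_{u,v,x}$ under the substitution $(u,v)\mapsto(wua,wva^x)$ for $w\in Q_{u,v,x}$ and $(a,a^x)\in Q_x$.

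\medskip

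\textbf{Proof plan.}

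The plan is to reduce both statements to the group-theoretic identities already recorded in Lemma~\ref{lemproprQuvx}, since the maps $\theta$ and $\pi$ are defined purely in terms of the elements $uxv^{-1}$, the subgroups $Q_{u,v,x}$, and the double coset $Q_{u,v,x}(u,v)Q_x$ inside $(P\times P)/Q_x$-type data. For part i), I would first compute the element controlling $\theta_{wua,wva^x,x}$, namely $(wua)x(wva^x)^{-1}$. Writing this out, $(wua)x(wva^x)^{-1}=wuax a^{-x}v^{-1}w^{-1}$; since $a\in P\cap{}^xP$ we have $a^x=x^{-1}ax$, so $axa^{-x}=axx^{-1}a^{-1}x=ax a^{-1}\cdot\!$ — more carefully, $ax(a^x)^{-1}=ax(x^{-1}ax)^{-1}=axx^{-1}a^{-1}x=x$ after cancellation, hence $(wua)x(wva^x)^{-1}=wuxv^{-1}w^{-1}$. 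By Lemma~\ref{lemproprQuvx}, iii), $w\in C_P(uxv^{-1})$, so $wuxv^{-1}w^{-1}=uxv^{-1}$, and therefore $\theta_{wua,wva^x,x}(\alpha)=\alpha s^{-1}(uxv^{-1})=\theta_{u,v,x}(\alpha)$, which is i). One must also check that both maps have the same domain of definition, i.e. that $W\leq Q_{wua,wva^x,x}$ exactly when $W\leq Q_{u,v,x}$; but by Lemma~\ref{lemproprQuvx}, ii) we have $Q_{wua,wva^x,x}={}^w Q_{ua,va^x,x}={}^w Q_{u,v,x}=Q_{u,v,x}$, the last equality because $w\in Q_{u,v,x}$, so the domains coincide and i) holds verbatim.

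For part ii), recall $\pi_{u,v,x}(m)=\delta_{u,v,x}(\pi_x(s(m)))$, so it suffices to prove $\delta_{wua,wva^x,x}=\delta_{u,v,x}$ as functions on $k[PxP]$, equivalently that for all $\mathfrak{u},\mathfrak{v}\in P$,
$$(\mathfrak{u},\mathfrak{v}^{-1})\in Q_{wua,wva^x,x}\,(wua,wva^x)\,Q_x\iff (\mathfrak{u},\mathfrak{v}^{-1})\in Q_{u,v,x}\,(u,v)\,Q_x.$$
Here I would use $Q_{wua,wva^x,x}=Q_{u,v,x}$ from the computation above, so the left-hand coset is $Q_{u,v,x}(wua,wva^x)Q_x$. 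Now $(wua,wva^x)=(w,w)(u,v)(a,a^x)$: indeed the product of $(w,w)$, $(u,v)$, $(a,a^x)$ in $P\times P$ is $(wua,wva^x)$. Since $w=(w,w)\in Q_{u,v,x}$ we may absorb the left factor into $Q_{u,v,x}$, and since $(a,a^x)\in Q_x$ we may absorb the right factor into $Q_x$; hence $Q_{u,v,x}(wua,wva^x)Q_x=Q_{u,v,x}(u,v)Q_x$, which gives the required equivalence and therefore ii).

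\medskip

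\textbf{Main obstacle.} The only genuinely delicate point is the bookkeeping showing that $(wua,wva^x)$ factors as $(w,w)(u,v)(a,a^x)$ with the outer factors lying in $Q_{u,v,x}$ and $Q_x$ respectively, together with the verification that $ax(a^x)^{-1}=x$ and that $Q_{wua,wva^x,x}=Q_{u,v,x}$; all of these are immediate consequences of Lemma~\ref{lemproprQuvx}, ii)--iii) and the definition of $Q_x$, so once those are invoked the proof is a short direct check. I expect no real difficulty beyond keeping track of which group element multiplies on which side.
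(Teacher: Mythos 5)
Your proof is correct and for part i) is essentially identical to the paper's argument: expand $(wua)x(wva^x)^{-1}$, use $ax(a^x)^{-1}=x$ to reduce it to $wuxv^{-1}w^{-1}$, and then invoke Lemma~\ref{lemproprQuvx}\,iii) to drop the conjugation by $w$. The paper leaves part ii) to the reader; your verification — that $Q_{wua,wva^x,x}=Q_{u,v,x}$ via Lemma~\ref{lemproprQuvx}\,ii) and that $(wua,wva^x)=(w,w)(u,v)(a,a^x)$ lies in the same double coset $Q_{u,v,x}(u,v)Q_x$, so $\delta_{wua,wva^x,x}=\delta_{u,v,x}$ — is exactly the intended argument and fills that gap correctly.
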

\begin{proof}
  Statement ii) is left for the reader. For statement i) let us take an element $\alpha\in k$. It follows
  $$\theta_{wua,wva^x,x}(\alpha)=\alpha s^{-1}(wuax(a^{-1})^xv^{-1}w^{-1})=\alpha s^{-1}(wuaxx^{-1}a^{-1}xv^{-1}w^{-1})$$
  $$=\alpha s^{-1}(uxv^{-1}),$$
  where the last equality is true by Lemma \ref{lemproprQuvx}, iii).
\end{proof}

We emphasize that in this section, in contrast to Sections \ref{sec2} and \ref{sec3}, we prefer to use the more suggestive notation for cohomology of a group with coefficients in the group algebra or in the source algebra $\Hc^*(\Delta P,ikGi)$, opposed to  $\Hc^*(P,ikGi)$ where $P$ acts by conjugation on $ikGi$.
With the above notations $\pi_{u,v,x},\theta_{u,v,x}$ induce maps in cohomology
\begin{equation}\label{eqtetapi*}
\begin{split}
\theta_{u,v,x}^*:\Hc^*(W,k)\rightarrow\Hc^*(W,ikGi)\\
\pi_{u,v,x}^*:\Hc^*(W,ikGi)\rightarrow\Hc^*(W,k)
\end{split}
\end{equation}
Moreover we define the maps in cohomology
\begin{equation}\label{eqgamma}
\begin{split}
  \gamma_{u,v,x}&:\Hc^*(Q_{u,v,x},k)\rightarrow\Hc^*(\Delta P,ikGi)\\
  &\gamma_{u,v,x}:=\tr_{Q_{u,v,x}}^{\Delta P}\circ\theta_{u,v,x}^*
\end{split}
\end{equation}
here $\theta_{u,v,x}^*$ is obtained for $W=Q_{u,v,x}$.
From Lemma \ref{lemproprtetapi}  and Lemma \ref{lemproprQuvx}  we immediately collect some properties of the above maps (\ref{eqtetapi*}), (\ref{eqgamma}) in the next proposition.
\begin{proposition}\label{propproprtetaapigamma}
Let $x\in Y_{G,b,P_{\delta}},(u,v)\in P\times P, W\leq Q_{u,v,x},w\in Q_{u,v,x}$ and $(a,a^x)\in Q_x$. The following statements hold.
\begin{itemize}
  \item[i)]$\theta_{wua,wva^x,x}^*=\theta_{u,v,x}^*$ and $\gamma_{wua,wva^x,x}^*=\gamma_{u,v,x}^*;$
  \item[ii)]$\pi_{wua,wva^x,x}^*=\pi_{u,v,x}^*.$
\end{itemize}
\end{proposition}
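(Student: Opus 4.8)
The statement to prove is Proposition \ref{propproprtetaapigamma}, which collects properties of the maps $\theta_{u,v,x}^*$, $\pi_{u,v,x}^*$ and $\gamma_{u,v,x}$ defined in \eqref{eqtetapi*} and \eqref{eqgamma}. The plan is to deduce everything directly from Lemma \ref{lemproprtetapi} and Lemma \ref{lemproprQuvx}, since the cohomological maps are merely the maps induced in cohomology by the chain-level homomorphisms of $kW$-modules, and passing to cohomology is functorial.

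First I would handle part ii) and the first half of part i). By Proposition \ref{lemproprQuvx}, ii) we have $Q_{wua,wva^x,x}=Q_{u,v,x}$ (using $w\in Q_{u,v,x}$, so $Q_{wu,wv,x}={}^wQ_{u,v,x}=Q_{u,v,x}$, and then $Q_{ua,va^x,x}=Q_{u,v,x}$; combining these two identities gives the claim), so the source and target groups of the induced maps agree on both sides and the asserted equalities at least make sense. Now Lemma \ref{lemproprtetapi}, i) gives the equality of $kW$-module homomorphisms $\theta_{wua,wva^x,x}=\theta_{u,v,x}:k\to ikGi$ for any $W\le Q_{u,v,x}$, and applying the functor $\Hc^*(W,-)$ yields $\theta_{wua,wva^x,x}^*=\theta_{u,v,x}^*$. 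Identically, Lemma \ref{lemproprtetapi}, ii) gives $\pi_{wua,wva^x,x}=\pi_{u,v,x}$ at the chain level, hence $\pi_{wua,wva^x,x}^*=\pi_{u,v,x}^*$ after applying $\Hc^*(W,-)$; this is part ii).

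For the statement about $\gamma$ in part i), I would simply use its definition $\gamma_{u,v,x}=\tr_{Q_{u,v,x}}^{\Delta P}\circ\,\theta_{u,v,x}^*$ together with the two facts just established: since $Q_{wua,wva^x,x}=Q_{u,v,x}$ the transfer maps $\tr_{Q_{wua,wva^x,x}}^{\Delta P}$ and $\tr_{Q_{u,v,x}}^{\Delta P}$ coincide, and since $\theta_{wua,wva^x,x}^*=\theta_{u,v,x}^*$ (taking $W=Q_{u,v,x}$) the composites agree, giving $\gamma_{wua,wva^x,x}=\gamma_{u,v,x}$. This is essentially bookkeeping.

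There is no real obstacle here: the proposition is a formal corollary of the two preceding lemmas once one observes that the only thing that needs checking is that source/target groups match (which is Lemma \ref{lemproprQuvx}, ii)) and that the underlying chain maps coincide (which is Lemma \ref{lemproprtetapi}). The one point that deserves a sentence of care is the identity $Q_{wua,wva^x,x}=Q_{u,v,x}$: it is not a single clause of Lemma \ref{lemproprQuvx}, ii) but follows by composing its two clauses, first replacing $(u,v)$ by $(wu,wv)$ and using $w\in Q_{u,v,x}$ to get conjugation by $w$ which fixes $Q_{u,v,x}$, then absorbing $(a,a^x)\in Q_x$. I would spell that out and leave the rest to the reader, as the paper does for Lemma \ref{lemproprtetapi}, ii).

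\begin{proof}
By Lemma \ref{lemproprQuvx}, ii), since $w\in Q_{u,v,x}$ we have $Q_{wu,wv,x}={}^{w}Q_{u,v,x}=Q_{u,v,x}$, and then $Q_{ua,va^{x},x}=Q_{u,v,x}$; combining these, $Q_{wua,wva^{x},x}=Q_{u,v,x}$. In particular the maps in \eqref{eqtetapi*} and \eqref{eqgamma} labelled by $(wua,wva^{x},x)$ and by $(u,v,x)$ have the same source and target.

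Applying the functor $\Hc^{*}(W,-)$ to the equality of $kW$-module homomorphisms $\theta_{wua,wva^{x},x}=\theta_{u,v,x}$ from Lemma \ref{lemproprtetapi}, i) gives $\theta_{wua,wva^{x},x}^{*}=\theta_{u,v,x}^{*}$. Likewise, applying $\Hc^{*}(W,-)$ to the equality $\pi_{wua,wva^{x},x}=\pi_{u,v,x}$ from Lemma \ref{lemproprtetapi}, ii) gives $\pi_{wua,wva^{x},x}^{*}=\pi_{u,v,x}^{*}$, which is statement ii).

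Finally, from the definition $\gamma_{u,v,x}=\tr_{Q_{u,v,x}}^{\Delta P}\circ\,\theta_{u,v,x}^{*}$ together with $Q_{wua,wva^{x},x}=Q_{u,v,x}$ and $\theta_{wua,wva^{x},x}^{*}=\theta_{u,v,x}^{*}$ (for $W=Q_{u,v,x}$), we obtain
$$\gamma_{wua,wva^{x},x}=\tr_{Q_{wua,wva^{x},x}}^{\Delta P}\circ\,\theta_{wua,wva^{x},x}^{*}=\tr_{Q_{u,v,x}}^{\Delta P}\circ\,\theta_{u,v,x}^{*}=\gamma_{u,v,x},$$
which completes the proof of statement i).
\end{proof}
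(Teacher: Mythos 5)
Your proof is correct and follows precisely the route the paper indicates: the paper states the proposition without proof, noting only that it follows "immediately" from Lemma \ref{lemproprtetapi} and Lemma \ref{lemproprQuvx}, and your argument is exactly that deduction spelled out — establishing $Q_{wua,wva^{x},x}=Q_{u,v,x}$ by combining the two clauses of Lemma \ref{lemproprQuvx}, ii), then applying functoriality of $\Hc^{*}(W,-)$ to the chain-level equalities of Lemma \ref{lemproprtetapi}. The one detail worth retaining is your observation that $Q_{wua,wva^{x},x}=Q_{u,v,x}$ is not a single clause of Lemma \ref{lemproprQuvx}, ii) but requires composing both and using $w\in Q_{u,v,x}$ so that ${}^{w}Q_{u,v,x}=Q_{u,v,x}$.
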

For $x\in Y_{G,b,P_{\delta}}$ by \cite[Lemma 44.1, c)]{TH} there is an isomorphism of $k[P\times P]$-modules
$$k[PxP]\cong \Ind_{Q_x}^{P\times P}k,$$ hence it  follows the isomorphism
$$ikGi\cong \bigoplus_{x\in Y_{G,b,P_{\delta}}}\Res^{P\times P}_{\Delta P}\Ind^{P\times P}_{Q_x}k,$$ as $k\Delta P$-modules. For each $x\in Y_{G,b,P_{\delta}}$ we choose once and for all a system of representatives, which we denote by $[Q_x]$, of double cosets $\Delta P\backslash P\times P/Q_x$. By Mackey decomposition we get
$$\Res^{P\times P}_{\Delta P}\Ind^{P\times P}_{Q_x}k\cong \bigoplus_{(u_x,v_x)\in[Q_x]}\Ind_{\Delta P\cap{}^{(u_x,v_x)}Q_x}^{\Delta P}\Res^{{}^{(u_x,v_x)}Q_x}_{\Delta P\cap{}^{(u_x,v_x)}Q_x}{}^{(u_x,v_x)}k.$$
For shortness, we will identify from now ${}^{(u_x,v_x)}k$ with $k$. We can do this, since $k$ is the trivial $k[\Delta P\cap{}^{(u_x,v_x)}Q_x]$-module; moreover we will use the notations $$Q_{u_x,v_x}=Q_{u_x,v_x,x},\theta_{u_x,v_x}=\theta_{u_x,v_x,x},\pi_{u_x,v_x}=\pi_{u_x,v_x,x},~\text{etc}. $$ Thus we have the decomposition
\begin{equation}\label{eq8}
ikGi \cong\bigoplus_{\substack{x\in Y_{G,b,P_{\delta}}\\(u_x,v_x)\in[Q_x]}}\Ind_{Q_{u_x,v_x}}^{\Delta P}k
\end{equation}
as $k\Delta P$-modules.
In the first main result of this section we obtain an additive decomposition for the graded $k$-vector space $\Hc^*(\Delta P,ikGi)$.
\begin{theorem}\label{thmadditivdecom} With the above notations the map
$$\Hc^*(\Delta P,ikGi)\rightarrow\bigoplus_{\substack{x\in Y_{G,b,P_{\delta}}\\(u_x,v_x)\in[Q_x]}}\Hc^*(Q_{u_x,v_x},k)$$
$$[\zeta]\mapsto\sum_{\substack{x\in Y_{G,b,P_{\delta}}\\(u_x,v_x)\in[Q_x]}}(\pi_{u_x,v_x}^*\circ \res^{\Delta P}_{Q_{u_x,v_x}})([\zeta])$$
is an isomorphism of graded $k$-vector spaces with is inverse sending an element $[f]\in\Hc^*(Q_{u_x,v_x},k)$ to $\gamma_{u_x,v_x}([f])$.

\end{theorem}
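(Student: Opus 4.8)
The plan is to mimic exactly the strategy used by Siegel and Witherspoon to prove the additive decomposition (\ref{eqaddit}), adapting it to the source algebra via the decomposition (\ref{eq8}) of $ikGi$ as a $k\Delta P$-module. The key observation is that (\ref{eq8}) writes $ikGi$ as a direct sum of induced trivial modules $\Ind_{Q_{u_x,v_x}}^{\Delta P}k$, so by the Eckmann--Shapiro lemma each summand contributes a copy of $\Hc^*(Q_{u_x,v_x},k)$ to $\Hc^*(\Delta P,ikGi)$, and the total cohomology is the direct sum of these. What remains is to check that the abstract isomorphism furnished by Eckmann--Shapiro coincides with the explicit maps built from $\pi_{u_x,v_x}^*$, $\res^{\Delta P}_{Q_{u_x,v_x}}$ and $\gamma_{u_x,v_x}$.

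First I would make precise the identification of each summand: under the decomposition (\ref{eq8}), the composite $k \hookrightarrow \Ind_{Q_{u_x,v_x}}^{\Delta P}k \hookrightarrow ikGi$ of the canonical inclusion into the induced module with the summand inclusion is, after unwinding the Mackey and $[Q_x]$ bookkeeping, precisely the map $\theta_{u_x,v_x}$ followed by the inclusion of $k[PxP]$ into $ikGi$ via $s^{-1}$; dually, the composite $ikGi \twoheadrightarrow \Ind_{Q_{u_x,v_x}}^{\Delta P}k$ followed by evaluation is $\pi_{u_x,v_x}$. This is the content packaged in Lemma \ref{lemproprtetapi} and Proposition \ref{propproprtetaapigamma}, which guarantee these maps are well defined independent of the coset-representative choices. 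Then, applying $\Hc^*(\Delta P,-)$ to (\ref{eq8}) and using that cohomology commutes with finite direct sums, I get $\Hc^*(\Delta P,ikGi)\cong\bigoplus \Hc^*(\Delta P,\Ind_{Q_{u_x,v_x}}^{\Delta P}k)$, and the Eckmann--Shapiro isomorphism $\Hc^*(\Delta P,\Ind_{Q_{u_x,v_x}}^{\Delta P}k)\cong\Hc^*(Q_{u_x,v_x},k)$ is realized on the nose by restriction $\res^{\Delta P}_{Q_{u_x,v_x}}$ composed with the projection $\pi_{u_x,v_x}^*$ in one direction, and by $\theta_{u_x,v_x}^*$ followed by transfer $\tr^{\Delta P}_{Q_{u_x,v_x}}$, i.e. $\gamma_{u_x,v_x}$, in the other. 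One then verifies the two composites are mutually inverse: the computation $\gamma_{u_x,v_x}(\pi_{u_x,v_x}^*(\res^{\Delta P}_{Q_{u_x,v_x}}[\zeta]))$ summed over all indices returns $[\zeta]$ because $\sum \mathbb{i}\circ\theta_{u_x,v_x}\circ\pi_{u_x,v_x}\circ\mathrm{pr}$ is the identity of $ikGi$ as a $k\Delta P$-module homomorphism (this is just the statement that (\ref{eq8}) is a direct sum decomposition), and the other composite is handled by the standard Mackey-formula computation showing that the cross terms $\res\circ\tr$ over distinct double cosets vanish while the diagonal term is the identity.

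The main obstacle I anticipate is the bookkeeping in the last step: showing that $\pi_{u_x,v_x}^*\circ\res^{\Delta P}_{Q_{u_x,v_x}}\circ\gamma_{u'_{x'},v'_{x'}}$ is $\id$ when $(x,u_x,v_x)=(x',u'_{x'},v'_{x'})$ and $0$ otherwise. Concretely, one expands $\res^{\Delta P}_{Q_{u_x,v_x}}\circ\tr^{\Delta P}_{Q_{u'_{x'},v'_{x'}}}$ by the Mackey formula as a sum over double cosets $Q_{u_x,v_x}\backslash \Delta P / Q_{u'_{x'},v'_{x'}}$ of terms $\tr\circ(\text{conjugation})\circ\res$, and must check that after composing with $\theta_{u'_{x'},v'_{x'}}^*$ on the right and $\pi_{u_x,v_x}^*$ on the left, each such term evaluates correctly --- the nonvanishing being controlled by whether the relevant conjugate of $s^{-1}(u'_{x'}x'(v'_{x'})^{-1})$ has nonzero $\pi_{u_x,v_x}$-coefficient, which by the definition of $\delta_{u,v,x}$ forces $x=x'$ and the double cosets to match. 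This is exactly parallel to \cite[Lemma 4.2]{SieWith}, but the double-layer indexing (first over $Y_{G,b,P_\delta}$, then over $[Q_x]$) and the need to invoke Lemma \ref{lemproprQuvx} repeatedly to identify conjugates of the subgroups $Q_{u,v,x}$ make it more delicate; I would organize it so that the $k[P\times P]$-module statement (\ref{eq8}) does all the heavy lifting and the cohomological statement follows formally.
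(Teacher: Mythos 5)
Your overall route matches the paper's proof: start from the $k\Delta P$-module decomposition (\ref{eq8}), apply $\Hc^*(\Delta P,-)$ to get a direct sum, and identify each summand with $\Hc^*(Q_{u_x,v_x},k)$ via Eckmann--Shapiro realized by the explicit maps $\pi_{u_x,v_x}^*\circ\res$ (forward) and $\gamma_{u_x,v_x}=\tr\circ\theta_{u_x,v_x}^*$ (inverse); the paper does exactly this, citing \cite[Lemma 4.1]{SieWith} for the final identification.

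However, one justification you give is incorrect as stated. You claim that $\sum \mathbb{i}\circ\theta_{u_x,v_x}\circ\pi_{u_x,v_x}\circ\mathrm{pr}$ is the identity of $ikGi$ as a $k\Delta P$-module homomorphism and that this is ``just the statement that (\ref{eq8}) is a direct sum decomposition.'' That is false: the maps $\theta_{u_x,v_x}$ and $\pi_{u_x,v_x}$ are only $Q_{u_x,v_x}$-equivariant, not $\Delta P$-equivariant (the element $s^{-1}(u_xxv_x^{-1})$ is fixed by $Q_{u_x,v_x}$ but not by $\Delta P$). The composite $\theta_{u_x,v_x}\circ\pi_{u_x,v_x}$ is the projection of the induced module $\Ind_{Q_{u_x,v_x}}^{\Delta P}k$ onto the one-dimensional basepoint line $k\cdot s^{-1}(u_xxv_x^{-1})$, which is a proper subspace when $Q_{u_x,v_x}\neq \Delta P$; so the sum of these composites is not $\id_{ikGi}$. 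What (\ref{eq8}) actually gives you is $\sum_{x,(u_x,v_x)}\iota_{x,u_x,v_x}\circ\mathrm{pr}_{x,u_x,v_x}=\id$ for the genuine $\Delta P$-linear inclusion/projection of each induced summand. The missing piece --- recovering the identity in cohomology from the $Q_{u_x,v_x}$-linear splitting of the induced module --- is precisely what the transfer in $\gamma_{u_x,v_x}$ supplies, and its verification is the content of \cite[Lemma 4.1]{SieWith} (note: Lemma 4.1, not Lemma 4.2), which the paper cites directly rather than re-deriving. Replacing your flawed module-level claim by an appeal to \cite[Lemma 4.1]{SieWith} (or by carrying out the Eckmann--Shapiro verification explicitly) closes the gap, and the rest of your argument goes through as planned.
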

\begin{proof}
We have the isomorphisms of $k\Delta P$-modules
$$ikGi \cong \bigoplus_{x\in Y_{G,b,P_{\delta}}}k[PxP]\cong \bigoplus_{x\in Y_{G,b,P_{\delta}}}\Res^{P\times P}_{\Delta P}\Ind^{P\times P}_{Q_x}k,$$
where the first isomorphism is $s$ and the second isomorphism is given by
$$\mathfrak{u}x\mathfrak{v}\mapsto(\mathfrak{u},\mathfrak{v}^{-1})\otimes_{kQ_x}1$$ with its inverse
$$(\mathfrak{u},\mathfrak{v})\otimes_{kQ_x}1\mapsto \mathfrak{u}x\mathfrak{v}^{-1}$$
for any $\mathfrak{u},\mathfrak{v}\in P$ and $x\in Y_{G,b,P_{\delta}}$. We obtain an isomorphism in cohomology
$$\Hc^*(\Delta P,ikGi)\cong \bigoplus_{x\in Y_{G,b,P_{\delta}}}\Hc^*(\Delta P,\Res^{P\times P}_{\Delta P}\Ind^{P\times P}_{Q_x}k).$$
Now for each $x\in Y_{G,b,P_{\delta}}$ as we have seen in (\ref{eq8}) that Mackey decomposition tells us
$$\Res^{P\times P}_{\Delta P}\Ind^{P\times P}_{Q_x}k\cong \bigoplus_{(u_x,v_x)\in[Q_x]}\Ind_{Q_{u_x,v_x}}^{\Delta P}k.$$
Explicitly this isomorphism is obtained from the decomposition of $P\times P$ in double cosets, that is for any $(\mathfrak{u},\mathfrak{v}^{-1})\in P\times P$ there is a unique representative $(u_x,v_x)\in[Q_x]$ such that
$$(\mathfrak{u},\mathfrak{v}^{-1})=(\mathfrak{u}',\mathfrak{u}')(u_x,v_x)(a,a^x)$$
for some $\mathfrak{u}'\in\Delta P$ and $(a,a^x)\in Q_x$. The isomorphism is given by
$$(\mathfrak{u},\mathfrak{v}^{-1})\otimes_{kQ_x}1\mapsto (\mathfrak{u}',\mathfrak{u}')\otimes_{k[Q_{u_x,v_x}]}1.$$
This induces an isomorphism in cohomology $$\Hc^*(\Delta P,\Res^{P\times P}_{\Delta P}\Ind^{P\times P}_{Q_x}k)\cong \bigoplus_{(u_x,v_x)\in[Q_x]}\Hc^*(\Delta P,\Ind_{Q_{u_x,v_x}}^{\Delta P}k).$$
Composing it with the isomorphism of \cite[Lemma 4.1]{SieWith} one obtains the required isomorphism. Recall that in our case the isomorphism from \cite[Lemma 4.1]{SieWith} is obtained by composing $\res^{\Delta P}_{Q_{u_x,v_x}}$  with the map in cohomology  induced by
$$\Ind_{Q_{u_x,v_x}}^{\Delta P}k\rightarrow k,$$
which, with our notations (for  $\mathfrak{u}'\in P$), is given by
$$(\mathfrak{u}',\mathfrak{u}')\otimes_{k[Q_{u_x,v_x}]} 1\mapsto\left\{ \begin{array}{ll}
1,\quad \text{if}~(\mathfrak{u}',\mathfrak{u}')\in Q_{u_x,v_x}\\
0,\quad\text{if}~(\mathfrak{u}',\mathfrak{u}')\notin Q_{u_x,v_x}
\end{array} \right..$$
\end{proof}
\begin{remark}\label{remgammau1}
From \cite[Lemma 44.6]{TH} we know that $ikGi$ always has a direct summand isomorphic to $kP$ as $k[P\times P]$-module. It follows that we can choose $Y_{G,b,P_{\delta}}$ such that there is $x\in PC_G(P)$ in $Y_{G,b,P_{\delta}}$ (we take $x=1$), and in this case $Q_1=\Delta P$ with
$$kP\cong \Ind_{\Delta P}^{P\times P}k.$$
Moreover we obtain
$$\Res^{P\times P}_{\Delta P}\Ind^{P\times P}_{\Delta P}k\cong\bigoplus_{(u_1,v_1)\in[Q_1]}\Ind_{Q_{u_1,v_1}}^{\Delta P}k.$$
In the set $[Q_1]$ there is a representative of the double coset $\Delta P$, for example we take $(u_1,v_1)=(1,1)\in N_{P\times P}(\Delta P)$. It follows that $Q_{1,1}=\Delta P$ and the map
$$\theta_{1,1}^*:\Hc^*(\Delta P,k)\rightarrow\Hc^*(\Delta P,ikGi)$$
is actually the homomorphism $i_P^{ikGi}$ constructed and studied for any interior $P$-algebra by Linckelmann in \cite[Sections 2,3]{LiVariso} and applied for the source algebra in \cite[Section 5]{LiVariso}.
\end{remark}
Next we will give a way to describe the cup product in $\Hc^*(\Delta P,ikGi)$ in terms of direct sums decomposition described in Theorem \ref{thmadditivdecom}. We accomplish this as follows. Fix $x,y\in Y_{G,b,P_{\delta}}$ and $(u_x,v_x)\in[Q_x], (u_y,v_y)\in[Q_y]$ as in the statements after Proposition \ref{propproprtetaapigamma}, where an  element from $[Q_x]$ was denoted $(u_x,v_x)$ and similarly for $[Q_y]$. For each $w\in\Delta P$ there is a unique subset $$Z=\{z|z=z(x,y,u_x,v_x,wu_y,wv_y)\}$$ in $Y_{G,b,P_{\delta}}$, a set $A_z=\{(u'_z,v'_z)|(u'_z,v'_z)\in P\times P\}$ and $\alpha_z\in k,z\in Z$ such that
\begin{equation}\label{eqs-1s-1}
  s^{-1}(u_xxv_x^{-1})s^{-1}(wu_yyv_y^{-1}w^{-1})=\sum_{\substack{z\in Z\\(u'_z,v'_z)\in A_z}}\alpha_zs^{-1}(u'_zz(v'_z)^{-1})
\end{equation}
For two different $z$'s, for example $z\neq z', z,z'\in Z$, we can have the same pairs $(u'_z,v'_z)=(u'_{z'},v'_{z'})\in A_z\cap A_{z'}$, or not. If the  elements $w$ are from a chosen set of representatives of $Q_{u_x,v_x}\backslash\Delta P/Q_{u_y,v_y}$ then any element $z\in Z$ is independent of the choice of representatives of double cosets $Q_{u_x,v_x}\backslash\Delta P/Q_{u_y,v_y}$; this is a consequence of Lemma \ref{lemproprQuvx}, iii).

For each $w\in[Q_{u_x,v_x}\backslash\Delta P/Q_{u_y,v_y}]$ with the corresponding attached unique set $Z$, and for an element $(u'_z,v'_z)\in A_z$ (where $z\in Z$) such that equation (\ref{eqs-1s-1}) holds, there is a unique element $(u_z,v_z)\in [Q_z]$ such that
\begin{equation}\label{equ'zv'z}
(ru'_z,rv'_z)=(u_za,v_za^z)
\end{equation}
for some $r\in \Delta P$ and $(a,a^z)\in Q_z$.
The set of all $r\in \Delta P$ which satisfies (\ref{equ'zv'z}) is also a double coset, that is $Q_{u_z,v_z}\backslash\Delta P/W'$; for shortness, we denoted (and we will denote) the subgroup $Q_{u_x,v_x}\cap Q_{wu_y,wv_y}$ by $W'$ and the set $[Q_{u_x,v_x}\backslash\Delta P/Q_{u_y,v_y}]$, of representatives of double cosets, by $D$ .

\begin{theorem}\label{thmprodformula} With the above notations let $[f]\in \Hc^*(Q_{u_x,v_x},k)$ and $[g]\in\Hc^*(Q_{u_y,v_y},k)$. Then
$$\gamma_{u_x,v_x}([f])\cup\gamma_{u_y,v_y}([g])$$
$$=\sum_{\substack{w\in D\\z\in Z\\(u_z,v_z)}}\alpha_z\gamma_{u_z,v_z}\left(\tr_{Q_{u_z,v_z}\cap{}^r W'}^{Q_{u_z,v_z}}(\res^{{}^rQ_{u_x,v_x}}_{Q_{u_z,v_z}\cap{}^rW'}r^*([f])\cup\res^{{}^{rw}Q_{u_y,v_y}}_{Q_{u_z,v_z}\cap{}^rW'}(rw)^*([g]))\right)$$
where $D$ is a set of representatives of double cosets $Q_{u_x,v_x}\backslash\Delta P/Q_{u_y,v_y}$, the elements $z\in Z,$ $(u_z,v_z)=(u_z(w),v_z(w))$ and $r=r(w), r\in \Delta P$ are chosen to satisfy equations (\ref{eqs-1s-1}), (\ref{equ'zv'z}) and $W':=Q_{u_x,v_x}\cap {}^wQ_{u_y,v_y}$.
\end{theorem}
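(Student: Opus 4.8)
The plan is to imitate the proof of the Siegel--Witherspoon Product Formula \eqref{eqproductform}, replacing the group elements $g_i$ by the ``monomials'' $s^{-1}(u_xxv_x^{-1})\in ikGi$ that define the maps $\theta_{u_x,v_x}$, and absorbing the extra combinatorics caused by the fact that a product of two such monomials in $ikGi$ is not a single basis element but the $k$-linear combination recorded by \eqref{eqs-1s-1}; this is precisely what produces the sum over $z\in Z$ and the scalars $\alpha_z$.

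First I would unwind the definitions to get $\gamma_{u_x,v_x}([f])\cup\gamma_{u_y,v_y}([g])=\tr^{\Delta P}_{Q_{u_x,v_x}}(\theta_{u_x,v_x}^*[f])\cup\tr^{\Delta P}_{Q_{u_y,v_y}}(\theta_{u_y,v_y}^*[g])$, the cup product being taken with respect to the pairing $\mu$ on $ikGi$. Applying the projection formula, then the Mackey formula to $\res^{\Delta P}_{Q_{u_x,v_x}}\circ\tr^{\Delta P}_{Q_{u_y,v_y}}$ along the double cosets $D=[Q_{u_x,v_x}\backslash\Delta P/Q_{u_y,v_y}]$, then the projection formula once more, and using that $\res$ commutes with the maps $\theta^*$ and that the conjugation map satisfies $w^*\circ\theta_{u_y,v_y}^*=\theta_{wu_y,wv_y}^*\circ w^*$ (immediate from the definition of $\theta$ and the $k[P\times P]$-linearity of $s$), I would rewrite the product as a sum over $w\in D$ of $\tr^{\Delta P}_{W'}\bigl(\theta_{u_x,v_x}^*(\res^{Q_{u_x,v_x}}_{W'}[f])\cup\theta_{wu_y,wv_y}^*(\res^{{}^wQ_{u_y,v_y}}_{W'}w^*[g])\bigr)$, with $W'=Q_{u_x,v_x}\cap{}^wQ_{u_y,v_y}$. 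Next I would evaluate the inner cup product: the pairing $\mu\circ(\theta_{u_x,v_x}\otimes\theta_{wu_y,wv_y})\colon k\otimes k\to ikGi$ sends $1\otimes 1$ to $s^{-1}(u_xxv_x^{-1})s^{-1}(wu_yyv_y^{-1}w^{-1})=\sum_{z,(u'_z,v'_z)}\alpha_z\,s^{-1}(u'_zz(v'_z)^{-1})$ by \eqref{eqs-1s-1}. Grouping the pairs $(u'_z,v'_z)\in A_z$ into $W'$-orbits --- whose stabilizers are the subgroups $S_z:=W'\cap Q_{u'_z,v'_z,z}$, the $\Delta P$-point stabilizers of the relevant basis elements, controlled by Lemma \ref{lemproprQuvx} --- and combining naturality of the cup product in its pairing with the standard fact that the module homomorphism $\sum_{w'\in W'/S_z}{}^{w'}\phi$ induces on $W'$-cohomology the composite $\tr^{W'}_{S_z}\circ\phi^*\circ\res^{W'}_{S_z}$ for any $kS_z$-homomorphism $\phi$ between $kW'$-modules, the inner term becomes $\sum_{z,(u'_z,v'_z)}\alpha_z\,\tr^{W'}_{S_z}\bigl(\theta_{u'_z,v'_z,z}^*(\res^{Q_{u_x,v_x}}_{S_z}[f]\cup\res^{{}^wQ_{u_y,v_y}}_{S_z}w^*[g])\bigr)$, where the right-hand cup product is now the ordinary one with trivial coefficients and $(u'_z,v'_z)$ runs over orbit representatives.

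The remaining step is to re-index so that the chosen representatives $(u_z,v_z)\in[Q_z]$ and the maps $\gamma_{u_z,v_z}$ appear. Using $\tr^{\Delta P}_{W'}\circ\tr^{W'}_{S_z}=\tr^{\Delta P}_{S_z}$ and choosing, as in \eqref{equ'zv'z}, $r\in\Delta P$ and $(a,a^z)\in Q_z$ with $(ru'_z,rv'_z)=(u_za,v_za^z)$, a one-line computation with $a\,z\,(a^z)^{-1}=z$ gives $u'_zz(v'_z)^{-1}={}^{r^{-1}}(u_zzv_z^{-1})$, hence $r^*\circ\theta_{u'_z,v'_z,z}^*=\theta_{u_za,v_za^z,z}^*\circ r^*=\theta_{u_z,v_z,z}^*\circ r^*$, the last equality by Proposition \ref{propproprtetaapigamma}, i). Feeding this in, together with $\tr^{\Delta P}_{S_z}=\tr^{\Delta P}_{{}^rS_z}\circ r^*$, the identity ${}^rS_z=Q_{u_z,v_z}\cap{}^rW'$ (coming from ${}^rQ_{u'_z,v'_z,z}=Q_{u_z,v_z}$, a consequence of Lemma \ref{lemproprQuvx}, ii)), the facts that $r^*$ is a ring homomorphism commuting with $\res$ and that $r^*w^*=(rw)^*$, the commutation of $\theta_{u_z,v_z}^*$ with transfers inside $Q_{u_z,v_z}$, and transitivity $\tr^{\Delta P}_{{}^rS_z}=\tr^{\Delta P}_{Q_{u_z,v_z}}\circ\tr^{Q_{u_z,v_z}}_{Q_{u_z,v_z}\cap{}^rW'}$, each summand becomes
\[
\alpha_z\,\gamma_{u_z,v_z}\Bigl(\tr^{Q_{u_z,v_z}}_{Q_{u_z,v_z}\cap{}^rW'}\bigl(\res^{{}^rQ_{u_x,v_x}}_{Q_{u_z,v_z}\cap{}^rW'}r^*[f]\cup\res^{{}^{rw}Q_{u_y,v_y}}_{Q_{u_z,v_z}\cap{}^rW'}(rw)^*[g]\bigr)\Bigr).
\]

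Summing over $w\in D$, over $z\in Z$ and over the associated representatives $(u_z,v_z)$ then yields the claimed formula.

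The hard part will be exactly this index bookkeeping. In contrast to \eqref{eqproductform}, where $g_i\cdot{}^xg_j$ is a single group element and the only conjugating element is the $y$ of \eqref{eqind}, here the product $s^{-1}(u_xxv_x^{-1})s^{-1}(wu_yyv_y^{-1}w^{-1})$ spreads over the $W'$-orbits inside each $A_z$ --- which forces the extra relative trace $\tr^{W'}_{S_z}$, to be merged afterwards with the Mackey transfer --- and each basis element $s^{-1}(u'_zz(v'_z)^{-1})$ must be carried to its $[Q_z]$-normalized form by its own pair $(r,(a,a^z))$. Throughout, one has to keep checking that the subgroups $W'$, $S_z$ and ${}^rS_z=Q_{u_z,v_z}\cap{}^rW'$ lie inside the groups over which the relevant $\theta^*$, $\res$ and $\tr$ are defined, which is exactly the role of the containment and conjugation statements of Lemma \ref{lemproprQuvx} and Proposition \ref{propproprtetaapigamma}.
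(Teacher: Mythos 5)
Your proof is correct and arrives at the same intermediate form as the paper after the projection/Mackey/projection step, but the second half is genuinely different. The paper proceeds by applying Lemma \ref{lemthetthetcup} to rewrite $\theta^*_{u_x,v_x}\cup\theta^*_{wu_y,wv_y}$ as $\sum_z\alpha_z\theta^*_{u'_z,v'_z}$, then \emph{inverts} the additive decomposition of Theorem \ref{thmadditivdecom} by composing with $\sum_{t,(u_t,v_t)}\gamma_{u_t,v_t}\circ\pi^*_{u_t,v_t}\circ\res^{\Delta P}_{Q_{u_t,v_t}}$, runs a second Mackey decomposition for $\res^{\Delta P}_{Q_{u_t,v_t}}\circ\tr^{\Delta P}_{W'}$ to produce the $r$'s, and finally invokes Lemma \ref{lemproprthetcircpi}~iii) to show $\pi^*_{u_t,v_t}\circ\theta^*_{ru'_z,rv'_z}$ vanishes unless $t=z$ and $(u_t,v_t)=(u_z,v_z)$. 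You instead observe that the pairing $\mu\circ(\theta_{u_x,v_x}\otimes\theta_{wu_y,wv_y})\colon k\to ikGi$ is a $kW'$-homomorphism onto a $W'$-fixed element, decompose the image into $W'$-orbits of basis elements, identify the orbit stabilizers as $S_z=W'\cap Q_{u'_z,v'_z,z}$, and use the fact that $(\sum_{w'\in W'/S_z}{}^{w'}\psi)_*=\tr^{W'}_{S_z}\circ\psi_*\circ\res^{W'}_{S_z}$ to get the relative traces directly; you then recover $(u_z,v_z)\in[Q_z]$ and the double-coset element $r$ by a single change of variables $\tr^{\Delta P}_{S_z}=\tr^{\Delta P}_{{}^rS_z}\circ r^*$. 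Your route avoids the second Mackey decomposition and the large cancellation step, and makes the appearance of $r$ and $W'=Q_{u_x,v_x}\cap{}^wQ_{u_y,v_y}$ structurally transparent (as conjugating the orbit stabilizer into $Q_{u_z,v_z}$); the cost is that you have to supply the orbit-stabilizer observation, the fact that the coefficients $\alpha_z$ are constant on each $W'$-orbit because the product is $W'$-fixed, and the relative-trace identity for module maps, none of which the paper needs. The paper's route is more mechanical (decompose, project back, kill terms) and leans on Lemma \ref{lemproprthetcircpi}~iii) which you never use, at the price of more index bookkeeping. The one point you should make explicit in a full write-up is precisely the claim you flag as ``the hard part'': that the $W'$-orbit decomposition of $A_z$ and the pairs $(r,(u_z,v_z))$ produced by equation \eqref{equ'zv'z} are in the correct bijection with the index set $(w,z,(u_z,v_z))$ of the stated formula; this is where all the residual combinatorics lives.
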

To prove the theorem, we first deduce some lemmas where we collect more properties for the maps $\theta^*_{u,v,x}$ and $\pi^*_{u,v,x}$.
\begin{lemma}\label{lemproprthetcircpi}
  Let $x,y\in Y_{G,b,P_{\delta}},(u,v),(u',v')\in P\times P$, the subgroups $W_1\leq W\leq Q_{u,v,x}$ and $w\in\Delta P$. The following statements hold.
\begin{itemize}
  \item[i)] $w^*\circ\theta_{u,v,x}^*=\theta^*_{wu,wv,x}\circ w^*;$
  \item[ii)] The maps $\theta^*_{u,v,x}$ and $\pi^*_{u,v,x}$ commute with $\res^W_{W_1}$ respectively $\tr^W_{W_1}$.
  \item[iii)] We have that $\pi^*_{u,v,x}\circ\theta^*_{u',v',y}$ is the identity map if $x=y$ and $(u,v)\in Q_{u,v,x}(u',v')Q_x$, and is the zero map otherwise. In particular if $(u,v)=(u_x,v_x)\in[Q_x]$ and $(u',v')=(u'_y,v'_y)\in[Q_y]$ then $\pi^*_{u_x,v_x}\circ \theta^*_{u'_y,v'_y}$ is the identity map if $x=y$ and $(u_x,v_x)=(u'_x,v'_x)$, and is the zero map otherwise.
\end{itemize}
\end{lemma}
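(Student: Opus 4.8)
The plan is to prove the three statements directly from the definitions of $\theta_{u,v,x}$, $\pi_{u,v,x}$ and the decomposition isomorphism $s$, reducing each assertion in cohomology to the corresponding statement about the defining $kW$-module homomorphisms, which then propagates to cohomology because $\Hc^*(W,-)$ is a functor (and transfer, restriction, conjugation are natural transformations of $\delta$-functors, as used already in the proof of Theorem \ref{thmBockderiv}). For \textbf{i)}, I would check on the level of module maps that $w\cdot\theta_{u,v,x}(\alpha)=\theta_{wu,wv,x}(\alpha)$ after the appropriate identification: applying $w=(w,w)\in\Delta P$ to $\theta_{u,v,x}(\alpha)=\alpha s^{-1}(uxv^{-1})$ gives $\alpha s^{-1}(wuxv^{-1}w^{-1})=\alpha s^{-1}((wu)x(wv)^{-1})=\theta_{wu,wv,x}(\alpha)$, using that $s$ is a $k[P\times P]$-module isomorphism; passing to cohomology and recalling that the conjugation map $w^*$ is induced by this action gives the identity. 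For \textbf{ii)}, I would note that $\theta_{u,v,x}$ and $\pi_{u,v,x}$ are honest $kW$-module homomorphisms that do not depend on $W$ (only on the requirement $W\leq Q_{u,v,x}$, guaranteed by Lemma \ref{lemproprQuvx} iii)), so the claim is exactly the naturality of restriction and transfer with respect to a fixed coefficient module homomorphism — this is standard and is the kind of fact the paper has been invoking throughout.

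The substantive point is \textbf{iii)}, computing $\pi_{u,v,x}\circ\theta_{u',v',y}$ as a map $k\to k$ (which then gives $\pi^*_{u,v,x}\circ\theta^*_{u',v',y}$ on cohomology by functoriality). Here I would trace through the definitions: $\theta_{u',v',y}(\alpha)=\alpha s^{-1}(u'y(v')^{-1})$, and then $\pi_{u,v,x}(\alpha s^{-1}(u'y(v')^{-1}))=\alpha\,\delta_{u,v,x}(\pi_x(s(s^{-1}(u'y(v')^{-1}))))=\alpha\,\delta_{u,v,x}(\pi_x(u'y(v')^{-1}))$. If $x\neq y$ the element $u'y(v')^{-1}$ lies in the summand $k[PyP]$, which $\pi_x$ kills, so the composite is zero. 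If $x=y$, then $\pi_x$ is the identity on that summand and we are left with $\alpha\,\delta_{u,v,x}(u'x(v')^{-1})$, which by the very definition of $\delta_{u,v,x}$ equals $\alpha$ when $(u',(v')^{-1})\in Q_{u,v,x}(u,v)Q_x$ and $0$ otherwise; I would then rewrite the condition $(u',(v')^{-1})\in Q_{u,v,x}(u,v)Q_x$ as the equivalent symmetric condition $(u,v)\in Q_{u,v,x}(u',v')Q_x$ (using that $Q_{u,v,x}$, being a subgroup of $\Delta P$, and $Q_x$ act on the left and right of the double coset, and Lemma \ref{lemproprQuvx} ii) which records exactly how these groups absorb factors of the shape $(wu a, wva^x)$). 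For the ``in particular'' clause, when $(u,v)=(u_x,v_x)$ and $(u',v')=(u'_y,v'_y)$ are the chosen double coset representatives in $[Q_x]$ and $[Q_y]$, the condition $(u_x,v_x)\in Q_{u_x,v_x}(u'_x,v'_x)Q_x$ forces the two representatives to coincide, since distinct elements of $[Q_x]$ lie in distinct double cosets $\Delta P\backslash P\times P/Q_x$ and $Q_{u_x,v_x}\leq\Delta P$.

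The main obstacle I anticipate is purely bookkeeping in part iii): making sure the bimodule identifications ${}^{(u_x,v_x)}k\cong k$, the twist by $v\mapsto v^{-1}$ built into the isomorphism $k[PxP]\cong\Ind_{Q_x}^{P\times P}k$ in the proof of Theorem \ref{thmadditivdecom}, and the left/right conventions for $Q_{u,v,x}(u,v)Q_x$ are all handled consistently, so that the symmetry $(u',(v')^{-1})\in Q_{u,v,x}(u,v)Q_x \iff (u,v)\in Q_{u,v,x}(u',v')Q_x$ really does hold as stated; this is where a sign or inverse error would hide. Parts i) and ii) should be essentially formal once the conventions are pinned down, and the ``in particular'' statement is then immediate from the definition of the representative sets $[Q_x]$.
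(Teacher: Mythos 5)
Your proposal takes essentially the same route as the paper: part iii) is a direct calculation from the definitions of $\theta_{u',v',y}$, $\pi_{u,v,x}$, $\pi_x$, $s$ and $\delta_{u,v,x}$, while for parts i) and ii) you invoke naturality of conjugation, restriction and transfer with respect to the coefficient module maps $\theta_{u,v,x}$, $\pi_{u,v,x}$. The paper instead verifies i) and the transfer half of ii) by an explicit cochain-level chase (and leaves the rest to the reader); these amount to the same thing, and your reading that the substance of the naturality is just the module-level identity $w\cdot\theta_{u,v,x}(\alpha)=\theta_{wu,wv,x}(\alpha)$ is correct and is exactly what the paper's cochain computation is unwinding.

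However, there is a concrete slip in your part iii), precisely at the spot you flagged as risky. After $\pi_x$ you land on $u'x(v')^{-1}$, and when you unpack $\delta_{u,v,x}$ you must match this against $\mathfrak{u}x\mathfrak{v}$ with $\mathfrak{u}=u'$ and $\mathfrak{v}=(v')^{-1}$. The definition tests whether $(\mathfrak{u},\mathfrak{v}^{-1})\in Q_{u,v,x}(u,v)Q_x$, i.e.\ whether $(u',v')\in Q_{u,v,x}(u,v)Q_x$ --- not, as you wrote, $(u',(v')^{-1})\in Q_{u,v,x}(u,v)Q_x$. You undid the inverse in the wrong direction. With the correct reading $(u',v')\in Q_{u,v,x}(u,v)Q_x$, the claimed symmetry with $(u,v)\in Q_{u,v,x}(u',v')Q_x$ is genuine: if $(u',v')=q_1(u,v)q_2$ with $q_1\in Q_{u,v,x}$, $q_2\in Q_x$, then $Q_{u',v',x}=Q_{u,v,x}$ (conjugating ${}^{(u,v)}Q_x$ by $q_1\in\Delta P\cap{}^{(u,v)}Q_x$ leaves both $\Delta P$ and ${}^{(u,v)}Q_x$ fixed), so inverting gives $(u,v)=q_1^{-1}(u',v')q_2^{-1}\in Q_{u,v,x}(u',v')Q_x$, and the argument reconciling your computation with the lemma statement goes through. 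So: correct approach, one inverse mis-read that you should fix before relying on the symmetrization step.
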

\begin{proof}
  \begin{itemize}
\item[i)] Recall that by Lemma \ref{lemproprQuvx}, ii) we know that ${}^wQ_{u,v,x}=Q_{wu,wv,x}$ and that $w^*:\Hc^*(W,k)\rightarrow\Hc^*({}^wW,k)$ is the conjugation map in cohomology. Let $[f]\in \Hc^*(W,k)$ and $\mathfrak{a}\in\mathcal{P}_*$ where $\mathcal{P}_*\rightarrow k$ is a $k\Delta P$-projective resolution of $k$ as trivial $k\Delta P$-module. We have that $(w^*\circ\theta^*_{u,v,x})[f]=[w^*(\theta_{u,v,x}\circ f)]$ with
    $$w^*(\theta_{u,v,x}\circ f)(\mathfrak{a})=w(\theta_{u,v,x}\circ f)(w^{-1}\mathfrak{a})=w(f(w^{-1}\mathfrak{a})s^{-1}(uxv^{-1}))$$$$=f(w^{-1}\mathfrak{a})s^{-1}(wuxv^{-1}w^{-1}).$$
    Also we have that
    $(\theta^*_{wu,wv,x}\circ w^*)[f]=[\theta_{wu,wv,x}\circ w^*(f)]$ with
    $$\theta_{wu,wv,x}(w^*(f)(\mathfrak{a}))=w^*(f)(\mathfrak{a})s^{-1}(wuxv^{-1}w^{-1})$$$$=(wf(w^{-1}\mathfrak{a})s^{-1}(wuxv^{-1}w^{-1}))=f(w^{-1}\mathfrak{a})s^{-1}(wuxv^{-1}w^{-1}).$$
    Notice that this statement is true for any $w\in\Delta P$.
    \item[ii)] We verify only that $\tr_{W_1}^W\circ\theta^*_{u,v,x}=\theta^*_{u,v,x}\circ \tr_{W_1}^W$, the other statements are similar and left for the reader. Let $[f]\in \Hc^*(W,k)$. We obtain
    $$\tr_{W_1}^W(\theta_{u,v,x}(f))(\mathfrak{a})=\sum_{w\in W/W_1}w\theta_{u,v,x}(f(w^{-1}\mathfrak{a}))$$$$=\sum_{w\in W/W_1}f(w^{-1}\mathfrak{a})s^{-1}(wuxv^{-1}w^{-1})=\sum_{w\in W/W_1} f(w^{-1}\mathfrak{a})s^{-1}(uxv^{-1}),$$
    where the last equality holds since $w\in Q_{u,v,x}$ by Lemma \ref{lemproprQuvx}, iii).
    $$\theta_{u,v,x}(\tr_{W_1}^W(f))(\mathfrak{a})=\theta_{u,v,x}(\sum_{W/W_1}wf(w^{-1}\mathfrak{a}))=\sum_{W/W_1}f(w^{-1}\mathfrak{a})s^{-1}(uxv^{-1}).$$

    \item[iii)] Let $\alpha\in k$. It follows that
    \begin{align*}
      (\pi_{u,v,x}\circ \theta_{u',v',y})(\alpha)&=\pi_{u,v,x}(\alpha s^{-1}(u'y(v')^{-1}))\\
      &=\alpha\delta_{u,v,x}[\pi_x(s(s^{-1}(u'y(v')^{-1})))]\\
      &=\left\{ \begin{array}{ll}
\alpha\delta_{u,v,x}(u'y(v')^{-1}),\quad \text{if}~x=y\\
0,\qquad~~~~~~~~~~~~~~~~~~~ \text{if}~x\neq y
\end{array} \right.
\\
      &=\left\{ \begin{array}{ll}
\alpha,\quad \text{if}~x=y~\text{and}~(u',v')\in Q_{u,v,x}(u,v)Q_x\\
0,\quad\text{otherwise}
\end{array} \right.
    \end{align*}
    The particular case is an immediate consequence.
  \end{itemize}
\end{proof}
\begin{lemma}\label{lemthetthetcup} With the notations of (\ref{eqs-1s-1}) and(\ref{equ'zv'z}) the following statement holds
$$\theta^*_{u_x,v_x}([f])\cup\theta^*_{wu_y,wv_y}([g])=\sum_{\substack{z\in Z\\(u'_z,v'_z)\in A_z}}\theta_{u'_z,v'_z}^*([f]\cup [g])$$
for any $[f]\in\Hc^*(W',k)$ and $[g]\in\Hc^*(W',k).$
\end{lemma}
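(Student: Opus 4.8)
The plan is to avoid any explicit cochain computation and instead deduce the formula from the multiplication rule (\ref{eqs-1s-1}) in the source algebra together with the naturality of cup products in the coefficient modules. (One could equally well argue on cochains with a $\Delta P$-equivariant diagonal approximation of a $k\Delta P$-projective resolution $\mathcal{P}_*$ of $k$, restricted to $W'$, exactly as in the proof of Lemma \ref{lemproprthetcircpi}, but the coefficient-level argument is shorter.) The first thing I would record is that, for any subgroup $W\le\Delta P$, the cup product on $\Hc^*(W,ikGi)$ with respect to the pairing $\mu\colon ikGi\otimes_k ikGi\to ikGi$ — here $\mu$ is $k\Delta P$-equivariant for the conjugation action, hence $kW$-equivariant — factors as the composite of the external cup product
$$\Hc^*(W,ikGi)\otimes_k\Hc^*(W,ikGi)\longrightarrow\Hc^*(W,ikGi\otimes_k ikGi),$$
which is natural in both coefficient arguments, with the map on cohomology induced by $\mu$; see \cite[Sections 2, 3]{SieWith}.

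I would then apply this with $W=W'$. Since $W'\le Q_{u_x,v_x}$ and $W'\le Q_{wu_y,wv_y}$, the maps $\theta^*_{u_x,v_x}$ and $\theta^*_{wu_y,wv_y}$ are defined on $\Hc^*(W',k)$ and are the maps induced by the coefficient homomorphisms $\theta_{u_x,v_x},\theta_{wu_y,wv_y}\colon k\to ikGi$; hence, by naturality of the external cup product,
$$\theta^*_{u_x,v_x}([f])\cup\theta^*_{wu_y,wv_y}([g])=\big(\mu\circ(\theta_{u_x,v_x}\otimes\theta_{wu_y,wv_y})\big)_*\big([f]\cup[g]\big),$$
where we identify $\Hc^*(W',k\otimes_k k)$ with $\Hc^*(W',k)$ through $k\otimes_k k\cong k$, so that $[f]\cup[g]$ is the ordinary cup product in $\Hc^*(W',k)$. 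Now the coefficient map $\mu\circ(\theta_{u_x,v_x}\otimes\theta_{wu_y,wv_y})\colon k\cong k\otimes_k k\to ikGi$ sends $1$ to $s^{-1}(u_xxv_x^{-1})s^{-1}(wu_yyv_y^{-1}w^{-1})$, and by (\ref{eqs-1s-1}) this equals $\sum_{z\in Z,\,(u'_z,v'_z)\in A_z}\alpha_zs^{-1}(u'_zz(v'_z)^{-1})$; that is, $\mu\circ(\theta_{u_x,v_x}\otimes\theta_{wu_y,wv_y})=\sum_{z,(u'_z,v'_z)}\alpha_z\theta_{u'_z,v'_z}$ as maps $k\to ikGi$. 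Feeding this into the previous display and using functoriality of $\Hc^*(W',-)$ yields the assertion, in the form dictated by (\ref{eqs-1s-1}) (the scalars $\alpha_z$ appearing in the summands).

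The step I expect to be the genuine obstacle is combinatorial rather than homological: one must check that the right-hand side makes sense, i.e. that $W'\le Q_{u'_z,v'_z,z}$ for every $z\in Z$ and every $(u'_z,v'_z)\in A_z$, so that each $\theta_{u'_z,v'_z}$ really is a $kW'$-homomorphism and $\theta^*_{u'_z,v'_z}$ acts on $\Hc^*(W',k)$. This is exactly where Lemma \ref{lemproprQuvx}, iii) is used: an element of $W'=Q_{u_x,v_x}\cap Q_{wu_y,wv_y}$ centralises both $u_xxv_x^{-1}$ and $wu_yyv_y^{-1}w^{-1}$, hence, by the $k[P\times P]$-equivariance of $s$ and the Mackey bookkeeping of (\ref{eq8}) and (\ref{equ'zv'z}) underlying (\ref{eqs-1s-1}), fixes $s^{-1}(u_xxv_x^{-1})$, $s^{-1}(wu_yyv_y^{-1}w^{-1})$ and the standard-basis summands $s^{-1}(u'_zz(v'_z)^{-1})$ of their product, which translates into the required containments. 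With that verified, the two displays above complete the argument, and this is the form in which the lemma is then fed into the proof of Theorem \ref{thmprodformula}.
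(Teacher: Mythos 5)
Your proof is in substance the paper's: the paper realizes the cup product on cochains as $[\mu\circ(\theta_{u_x,v_x}\otimes\theta_{wu_y,wv_y})\circ(f\otimes g)\circ D]$ for a diagonal approximation $D$, and then expands $\mu\circ(\theta_{u_x,v_x}\otimes\theta_{wu_y,wv_y})$ using (\ref{eqs-1s-1}); your naturality-of-external-cup-product phrasing is exactly what that cochain formula encodes, so the two routes coincide. Two of your side remarks deserve comment. First, you are right that the scalars $\alpha_z$ must appear on the right-hand side; they are missing from the lemma as printed, but the paper's own computation produces them and they are carried through in the proof of Theorem \ref{thmprodformula}. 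Second, you correctly flag that one must know $W'\le Q_{u'_z,v'_z,z}$ for every pair occurring in (\ref{eqs-1s-1}), so that $\theta_{u'_z,v'_z}$ is a $kW'$-module homomorphism and $\theta^*_{u'_z,v'_z}$ is defined on $\Hc^*(W',k)$ at all --- a point the paper's proof passes over. Your proposed justification for it is too quick, though: an element $c\in W'$ fixes the two factors and hence fixes the \emph{sum} $\sum\alpha_z s^{-1}(u'_zz(v'_z)^{-1})$, but conjugation by $c$ merely permutes the standard basis of $\bigoplus_x k[PxP]$, so fixing a linear combination does not by itself force fixing each basis summand (summands with equal coefficients could be permuted among themselves). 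What is automatic, and already enough to make the left-hand side well defined, is that $\sum_z\alpha_z\theta_{u'_z,v'_z}=\mu\circ(\theta_{u_x,v_x}\otimes\theta_{wu_y,wv_y})$ is $kW'$-equivariant as a whole; to split it into individual well-defined $\theta^*_{u'_z,v'_z}([f]\cup[g])$'s one has either to verify the stabilizer condition for each $(u'_z,v'_z)$, or to first group the pairs into $W'$-conjugacy orbits before applying the induced maps.
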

\begin{proof}
Let $D:\mathcal{P}_*\rightarrow\mathcal{P}_*\otimes\mathcal{P}_*$ be the diagonal approximation, where $\mathcal{P}_*\rightarrow k$ is a $k\Delta P$-projective resolution of the trivial $k\Delta P$-module $k$. By the definition of the cup product we have
$$\theta^*_{u_x,v_x}([f])\cup \theta_{wu_y,wv_y}^*([g])=[\mu\circ (\theta_{u_x,v_x}\otimes \theta_{wu_y,wv_y})\circ (f\otimes g)\circ D],$$
where $\mu$ is the multiplication map in the source algebra. But since, for any $\alpha\in k$
$$(\mu\circ (\theta_{u_x,v_x}\otimes\theta_{wu_y,wv_y}))(\alpha)=\alpha s^{-1}(u_xxv_x^{-1})s^{-1}(wu_yyv_y^{-1}w^{-1})$$
$$=\sum_{\substack{z\in Z\\(u'_z,v'_z)}}\alpha\alpha_zs^{-1}(u'_zz(v'_z)^{-1})=\sum_{\substack{z\in Z\\(u'_z,v'_z)\in A_z}}\alpha_z\theta_{u'_z,v'_z}(\alpha),$$
we are done.
\end{proof}

\emph{Proof of Theorem \ref{thmprodformula}.} We apply \cite[Propostion 4.2.4, Theorem 4.2.6]{Ev} to obtain the following equalities
\begin{align*}
&~\gamma_{u_x,v_x}([f])\cup \gamma_{u_y,v_y}([g])\\&=\tr_{Q_{u_x,v_x}}^{\Delta P}(\theta_{u_x,v_x}^*([f]))\cup \tr_{Q_{u_y,v_y}}^{\Delta P}(\theta_{u_y,v_y}^*([g]))\\
&=\tr_{Q_{u_x,v_x}}^{\Delta P}\left[\theta_{u_x,v_x}^*([f])\cup(\res^{\Delta P}_{Q_{u_x,v_x}}\circ \tr^{\Delta P}_{Q_{u_y,v_y}})(\theta_{u_y,v_y}^*([g]))\right]\\
&=\sum_{w\in D}\tr_{Q_{u_x,v_x}}^{\Delta P}\left[\theta_{u_x,v_x}^*([f])\cup(\tr^{Q_{u_x,v_x}}_{{}^wQ_{u_y,v_y}\cap Q_{u_x,v_x}}\circ \right. \\
&\left.\circ\res^{{}^wQ_{u_y,v_y}}_{{}^wQ_{u_y,v_y}\cap Q_{u_x,v_x}})(w^*(\theta_{u_y,v_y}^*([g])))\right]
\end{align*}
By Lemma \ref{lemproprthetcircpi}, Lemma \ref{lemthetthetcup} and again \cite[Proposition 4.2.4]{Ev} the above sum is
\begin{align*}
&\sum_{w\in D}\tr_{Q_{u_x,v_x}}^{\Delta P}\left[\theta_{u_x,v_x}^*([f])\cup(\tr^{Q_{u_x,v_x}}_{W'}\circ \res^{Q_{wu_y,wv_y}}_{W'})(\theta_{wu_y,wv_y}^*(w^*([g])))\right]\\
&=\sum_{w\in D}\tr_{W'}^{\Delta P}\left[\res^{Q_{u_x,v_x}}_{W'}(\theta_{u_x,v_x}^*([f]))\cup \res^{Q_{wu_y,wv_y}}_{W'}(\theta_{wu_y,wv_y}^*(w^*([g])))\right]\\
&=\sum_{w\in D}\tr_{W'}^{\Delta P}\left[\res^{Q_{u_x,v_x}}_{W'}(\theta_{u_x,v_x}^*([f]))\cup \res^{Q_{wu_y,wv_y}}_{W'}(\theta_{wu_y,wv_y}^*(w^*([g])))\right]\\
&=\sum_{w\in D}\tr_{W'}^{\Delta P}\left[\theta_{u_x,v_x}^*(\res^{Q_{u_x,v_x}}_{W'}([f]))\cup \theta_{wu_y,wv_y}^*(\res^{Q_{wu_y,wv_y}}_{W'}(w^*([g])))\right]\\
&=\sum_{w\in D}\sum_{\substack{z\in Z\\(u'_z,v'_z)\in A_z}}\alpha_z\left(\tr_{W'}^{\Delta P} \circ \theta_{u'_z,v'_z}^*\right)\left(\res^{Q_{u_x,v_x}}_{W'}([f])\cup \res^{{}^wQ_{u_y,v_y}}_{W'}(w^*([g]))\right)
\end{align*}
Next we apply the isomorphism from Theorem \ref{thmadditivdecom}, Mackey decomposition 
\cite[Theorem 4.2.6]{Ev} and Lemma \ref{lemproprthetcircpi} i), ii), hence the above sum becomes
\begin{align*}
  &\sum_{\substack{w\in D\\z\in Z\\(u'_z,v'_z)\in A_z}}\sum_{\substack{t\in Y_{G,b,P_{\delta}}\\(u_t,v_t)\in [Q_t]}}\alpha_z(\gamma_{u_t,v_t}\circ\pi_{u_t,v_t}^*\circ\res^{\Delta P}_{Q_{u_t,v_t}}\circ \tr^{\Delta P}_{W'}\circ\theta^*_{u'_z,v'_z})(\res^{Q_{u_x,v_x}}_{W'}([f])\cup\\&\cup \res^{Q_{wu_y,wv_y}}_{W'}(w^*([g]))\\&=
  \sum_{w\in D}\sum_{\substack{z\in Z\\(u'_z,v'_z)\in A_z}}\alpha_z\sum_{\substack{t\in Y_{G,b,P_{\delta}}\\(u_t,v_t)\in [Q_t]}}\sum_{r\in[Q_{u_t,v_t}\backslash\Delta P/W']}(\gamma_{u_t,v_t}\circ\pi_{u_t,v_t}^*\circ\tr^{Q_{u_t,v_t}}_{Q_{u_t,v_t}\cap{}^r W'}\circ \\&\circ \res^{{}^rW'}_{Q_{u_t,v_t}\cap{}^rW'}\circ\theta^*_{ru'_z,rv'_z})(\res^{{}^rQ_{u_x,v_x}}_{{}^rW'}(r^*[f])\cup \res^{{}^rQ_{wu_y,wv_y}}_{{}^rW'}((rw)^*([g]))\\&=
  \sum_{w\in D}\sum_{\substack{z\in Z\\(u'_z,v'_z)\in A_z}}\alpha_z\sum_{\substack{t\in Y_{G,b,P_{\delta}}\\(u_t,v_t)\in [Q_t]}}\sum_{r\in[Q_{u_t,v_t}\backslash\Delta P/W']}(\gamma_{u_t,v_t}\circ\pi_{u_t,v_t}^*\circ\theta^*_{ru'_z,rv'_z}\circ\\&\circ\tr^{Q_{u_t,v_t}}_{Q_{u_t,v_t}\cap{}^r W'})(\res^{{}^rQ_{u_x,v_x}}_{Q_{u_t,v_t}\cap{}^rW'}(r^*[f])\cup \res^{{}^rQ_{wu_y,wv_y}}_{Q_{u_t,v_t}\cap{}^rW'}((rw)^*([g])).
\end{align*}

By Lemma \ref{lemproprthetcircpi}, iii) each $t$ must be equal to some $z$ to obtain non-zero terms, and in these cases by (\ref{equ'zv'z}) we know that
$$(ru'_z,rv'_z)=(u_za,v_za^z)$$
for some $a\in Q_z$. We obtain the shorter sum
 $$ \sum_{\substack{w\in D\\z\in Z\\(u_z,v_z)}}\alpha_z(\gamma_{u_z,v_z}\circ\pi_{u_z,v_z}^*\circ\theta^*_{u_za,v_za^z}\circ\tr^{Q_{u_z,v_z}}_{Q_{u_z,v_z}\cap{}^r W'})(\res^{{}^rQ_{u_x,v_x}}_{Q_{u_z,v_z}\cap{}^rW'}(r^*[f])\cup$$$$\cup \res^{{}^rQ_{wu_y,wv_y}}_{Q_{u_z,v_z}\cap{}^rW'}((rw)^*([g]))$$$$=\sum_{\substack{w\in D\\z\in Z\\(u_z,v_z)}}\alpha_z\gamma_{u_z,v_z}(\tr^{Q_{u_z,v_z}}_{Q_{u_z,v_z}\cap{}^r W'}(\res^{{}^rQ_{u_x,v_x}}_{Q_{u_z,v_z}\cap{}^rW'}(r^*[f])\cup \res^{{}^{rw}Q_{u_y,v_y}}_{Q_{u_z,v_z}\cap{}^rW'}((rw)^*([g]))).$$
 The last equality is true since we apply Proposition \ref{propproprtetaapigamma}, i) and Lemma \ref{lemproprthetcircpi}, iii). We are done.

 \section{Bockstein homomorphisms for cohomology of defect groups with coefficients in source algebras and further properties}\label{sec5}

We continue with the notations of Section \ref{sec4}. The additive decomposition from Theorem \ref{thmadditivdecom} allows us to obtain a similar definition as in Remark \ref{remexplbockhochsch} for Bockstein homomorphisms of $\Hc^*(\Delta P, ikGi)$.
\begin{definition}\label{defnBockikgi}
  The \emph{Bockstein homomorphism for the cohomology of a defect group $P$ with coefficients in the source algebra} $ikGi$ is the map
  $$\bbbeta_{P,ikGi}:\Hc^r(\Delta P, ikGi)\rightarrow \Hc^{r+1}(\Delta P, ikGi)$$
  $$\bbbeta_{P,ikGi}(\gamma_{u_x,v_x}([f])):=\gamma_{u_x,v_x}(\beta_{Q_{u_x,v_x}}([f])),$$
  for any $x\in Y_{G,b,P_{\delta}}, (u_x,v_x)\in[Q_x]$ and $[f]\in\Hc^r(Q_{u_x,v_x},k)$.
\end{definition}
Some consequences of Theorem \ref{thmprodformula} are included in the following theorem where we collect some properties of $\bbbeta_{P,ikGi}$.
\begin{theorem} \label{thmbocksource}With the above notations the following statements hold:
\begin{itemize}
  \item[i)] $\bbbeta_{P,ikGi}$ is a differential;
  \item[ii)] The Bockstein map $\bbbeta_{P,ikGi}$ is a graded derivation, hence $\Hc^*(\Delta P,ikGi)$ is a DG-algebra;
  \item[iii)] The Bockstein map $\bbbeta_{P,ikGi}$  is compatible to $\beta_{G,b}$, i.e. the next diagram is commutative
      \begin{displaymath}
 \xymatrix{\Hc^r(G,b,P_{\delta})\ar[rr]^{\beta_{G,b}}\ar[d]^{\gamma_{u_1,v_1}}&&\Hc^{r+1}(G,b,P_{\delta})\ar[d]^{\gamma_{u_1,v_1}} \\ \Hc^r(\Delta P,ikGi)\ar[rr]^{\bbbeta_{P,ikGi}}&&\Hc^{r+1}(\Delta P,ikGi)
                                            }.
\end{displaymath}
\end{itemize}

\end{theorem}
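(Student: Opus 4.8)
The plan is to imitate the proofs of Theorem \ref{thmBockderiv} and Remark \ref{remdefnbockcohombl}: Theorem \ref{thmadditivdecom} reduces every assertion to a statement about the ordinary Bockstein maps $\beta_{Q_{u_x,v_x}}$ on $\Hc^*(Q_{u_x,v_x},k)$, and Theorem \ref{thmprodformula} governs the interaction with the cup product. Throughout I use that, by Theorem \ref{thmadditivdecom}, $\gamma_{u_x,v_x}$ is the inverse of the projection of $\Hc^*(\Delta P,ikGi)$ onto its $(x,(u_x,v_x))$-th summand; in particular $\bbbeta_{P,ikGi}$ is well defined by extending Definition \ref{defnBockikgi} $k$-linearly, and applying it to a pure element returns a pure element in the same summand. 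Hence for part i), applying $\bbbeta_{P,ikGi}$ twice to a generator gives, by Definition \ref{defnBockikgi},
$$\bbbeta_{P,ikGi}(\bbbeta_{P,ikGi}(\gamma_{u_x,v_x}([f])))=\bbbeta_{P,ikGi}(\gamma_{u_x,v_x}(\beta_{Q_{u_x,v_x}}([f])))=\gamma_{u_x,v_x}(\beta_{Q_{u_x,v_x}}(\beta_{Q_{u_x,v_x}}([f])))=0,$$
since $\beta_{Q_{u_x,v_x}}$ is a differential on $\Hc^*(Q_{u_x,v_x},k)$ (the $k$-linear extension of the ordinary Bockstein, cf.\ the proof of Theorem \ref{thmBockderiv}, iii)); by $k$-linearity, $\bbbeta_{P,ikGi}\circ\bbbeta_{P,ikGi}=0$.

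For part ii), let $[a]=\gamma_{u_x,v_x}([f])$ and $[b]=\gamma_{u_y,v_y}([g])$ be homogeneous. The combinatorial data in Theorem \ref{thmprodformula} — the set $D$, the set $Z$, the representatives $(u_z,v_z)$, the elements $r=r(w)$ and $w$, the scalars $\alpha_z$ and the subgroup $W'$ — depend only on $P$, $Q_x$, $Q_y$ and the isomorphism $s$, not on $[f]$ or $[g]$; therefore Theorem \ref{thmprodformula} applies verbatim after replacing $[f]$ by $\beta_{Q_{u_x,v_x}}([f])$, and likewise for $[g]$. Now apply $\bbbeta_{P,ikGi}$ to the right-hand side of Theorem \ref{thmprodformula}. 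Each summand becomes $\alpha_z\gamma_{u_z,v_z}(\beta_{Q_{u_z,v_z}}(\tr^{Q_{u_z,v_z}}_{Q_{u_z,v_z}\cap{}^rW'}(\cdots)))$; since restriction, transfer and the conjugation maps $r^*,(rw)^*$ are morphisms of cohomological $\delta$-functors, $\beta$ commutes with all of them, and since $\beta$ is a graded derivation on the cohomology with trivial coefficients of each subgroup involved, each summand splits into two. Reassembling the two resulting sums by Theorem \ref{thmprodformula}, applied respectively to $\gamma_{u_x,v_x}(\beta_{Q_{u_x,v_x}}([f]))\cup\gamma_{u_y,v_y}([g])$ and to $\gamma_{u_x,v_x}([f])\cup\gamma_{u_y,v_y}(\beta_{Q_{u_y,v_y}}([g]))$, yields
$$\bbbeta_{P,ikGi}([a]\cup[b])=\bbbeta_{P,ikGi}([a])\cup[b]+(-1)^{\mathrm{deg}([a])}[a]\cup\bbbeta_{P,ikGi}([b]).$$
This is, line for line, the computation in the proof of Theorem \ref{thmBockderiv}, ii), transcribed into the notation of Section \ref{sec4}; the bookkeeping is heavier, but no new idea is needed. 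Together with i) this gives the DG-algebra statement.

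For part iii), by Remark \ref{remgammau1} we may take $(u_1,v_1)=(1,1)$, so that $Q_{u_1,v_1}=\Delta P$ and $\gamma_{u_1,v_1}=\tr^{\Delta P}_{\Delta P}\circ\theta^*_{1,1}=\theta^*_{1,1}$. By Remark \ref{remdefnbockcohombl}, $\beta_{G,b}$ is the restriction of $\beta_P=\beta_{\Delta P}$ to the subalgebra $\Hc^*(G,b,P_{\delta})\subseteq\Hc^*(\Delta P,k)$, and $\beta_{G,b}([\zeta])\in\Hc^{r+1}(G,b,P_{\delta})$ for $[\zeta]\in\Hc^r(G,b,P_{\delta})$. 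Hence, by Definition \ref{defnBockikgi},
$$\bbbeta_{P,ikGi}(\gamma_{u_1,v_1}([\zeta]))=\gamma_{u_1,v_1}(\beta_{Q_{u_1,v_1}}([\zeta]))=\gamma_{u_1,v_1}(\beta_{\Delta P}([\zeta]))=\gamma_{u_1,v_1}(\beta_{G,b}([\zeta])),$$
which is exactly the commutativity of the square.

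The main obstacle is part ii): there is no conceptual difficulty, but one must carry the full Product Formula of Theorem \ref{thmprodformula}, with all of its combinatorial choices, through the computation and check carefully that the degree shift produced by $\beta$ interferes with none of the identifications (Mackey decomposition, the maps $\theta^*_{u,v,x}$, $\pi^*_{u,v,x}$, conjugation) used to establish that formula.
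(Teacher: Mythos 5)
Your proposal is correct and follows essentially the same route as the paper: i) is immediate from Definition \ref{defnBockikgi} and the additive decomposition of Theorem \ref{thmadditivdecom}, ii) transcribes the computation from Theorem \ref{thmBockderiv}, ii) through the Product Formula of Theorem \ref{thmprodformula}, and iii) uses Remark \ref{remgammau1} to take $(u_1,v_1)=(1,1)$, $Q_{1,1}=\Delta P$, $\gamma_{u_1,v_1}=\theta^*_{1,1}$, together with Remark \ref{remdefnbockcohombl}. The paper's own proof of i) and ii) is in fact just a one-line reference to Theorems \ref{thmBockderiv} and \ref{thmprodformula}, so your more detailed write-up fills in precisely the argument that is left implicit there.
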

\begin{proof}
In order to prove statements i) and ii) we apply similar methods as in the proof of Theorem \ref{thmBockderiv}; for showing these statements Theorem \ref{thmprodformula} is a crucial ingredient. By Definition \ref{defnBockikgi} we know that
$$\bbbeta_{P,ikGi}(\gamma_{u_1,v_1}([f])):=\gamma_{u_1,v_1}(\beta_{Q_{u_1,v_1}}([f])),$$
  for any $[f]\in\Hc^r(Q_{u_1,v_1},k)$. But by Remark \ref{remgammau1} we  choose $(u_1,v_1)=(1,1)\in\Delta P$ such that $Q_{1,1}=\Delta P$ and $\gamma_{u_1,v_1}=\theta_{1,1}^r$ hence
  $$\gamma_{u_1,v_1}:\Hc^r(\Delta P,k)\rightarrow\Hc^{r}(\Delta P,ikGi).$$
Next, Remark \ref{remdefnbockcohombl} implies that if $[f]$ is in $\Hc^r(G,b,P_{\delta})$ then $\beta_{G,b}([f])=\beta_P([f])$ is in $\Hc^{r+1}(G,b,P_{\delta})$. 
\end{proof}
 Theorem \ref{thmprodformula} and Bockstein map  defined in Definition \ref{defnBockikgi} can be applied in order to compute the graded algebra structure of the cohomology algebra $\Hc^*(\Delta P, ikGi)$ for some particular cases, for example when $P$ is cyclic of order $p$. This can be done similarly to \cite[Example p.28]{Ev}, but we do not pursue this here.

\end{document}